\documentclass[11pt,reqno]{amsart} 
\usepackage[margin=1cm]{geometry}
\usepackage{lipsum} 
\usepackage{amsmath, amssymb, amsthm, mathabx,graphicx}
\usepackage{mathrsfs, color}
\usepackage{url}
\usepackage{stackrel} 
\usepackage{courier}
\usepackage[all,cmtip]{xy} 
\usepackage{multicol,arydshln}
\usepackage{fullpage}
\usepackage{cancel,comment} 
\usepackage[usenames,dvipsnames]{xcolor}
\usepackage{parcolumns}

\oddsidemargin .225in
\evensidemargin .225in
\textwidth 6in
\textheight 8.2in


\usepackage[colorlinks=true, pdfstartview=FitV, linkcolor=blue, citecolor=blue, urlcolor=blue]{hyperref}	


\def\CircleArrowleft{\ensuremath{%
  \reflectbox{\rotatebox[origin=c]{180}{$\circlearrowleft$}}}}

\definecolor{darkred}{rgb}{0.7,0,0} 

\definecolor{darkgreen}{rgb}{0,0.6,0} 
\definecolor{darkblue}{rgb}{0,0,0.6} 

\catcode`~=11 
\newcommand{\urltilde}{\kern -.15em\lower .7ex\hbox{~}\kern .04em}  
\catcode`~=13 

\newcommand{\CC}{\mathbb{C}}

\newcommand{\fb}{\mathfrak{b}}



\newcommand{\diag}{\mathop{\mathrm{diag}}\nolimits}

\newcommand{\Proj}{\mathop{\mathrm{Proj}}\nolimits}

\newcommand{\sm}{\mathop{\mathrm{sm}}\nolimits}
\newcommand{\Ad}{\mathop{\mathrm{Ad}}\nolimits}

\newcommand{\Spec}{\mathop{\mathrm{Spec}}\nolimits}

\newcommand{\I}{\mathop{\mathbf{I}}\nolimits}

\newcommand{\sing}{\mathop{\mathrm{sing}}\nolimits}

\newcommand{\tr}{\mathop{\mathrm{tr}}\nolimits}

\newcommand{\Lie}{\mathop{\mathrm{Lie}}\nolimits}

\newcommand{\rss}{\mathop{\mathrm{rss}}\nolimits}

\makeatletter
\newif\if@borderstar
   \def\bordermatrix{\@ifnextchar*{%
       \@borderstartrue\@bordermatrix@i}{\@borderstarfalse\@bordermatrix@i*}%
   }
   \def\@bordermatrix@i*{\@ifnextchar[{\@bordermatrix@ii}{\@bordermatrix@ii[()]}}
   \def\@bordermatrix@ii[#1]#2{%
   \begingroup
     \m@th\@tempdima8.75\p@\setbox\z@\vbox{%
       \def\cr{\crcr\noalign{\kern 2\p@\global\let\cr\endline }}%
       \ialign {$##$\hfil\kern 2\p@\kern\@tempdima & \thinspace %
       \hfil $##$\hfil && \quad\hfil $##$\hfil\crcr\omit\strut %
       \hfil\crcr\noalign{\kern -\baselineskip}#2\crcr\omit %
       \strut\cr}}%
     \setbox\tw@\vbox{\unvcopy\z@\global\setbox\@ne\lastbox}%
     \setbox\tw@\hbox{\unhbox\@ne\unskip\global\setbox\@ne\lastbox}%
     \setbox\tw@\hbox{%
       $\kern\wd\@ne\kern -\@tempdima\left\@firstoftwo#1%
         \if@borderstar\kern2pt\else\kern -\wd\@ne\fi%
       \global\setbox\@ne\vbox{\box\@ne\if@borderstar\else\kern 2\p@\fi}%
       \vcenter{\if@borderstar\else\kern -\ht\@ne\fi%
         \unvbox\z@\kern-\if@borderstar2\fi\baselineskip}%
         \if@borderstar\kern-2\@tempdima\kern2\p@\else\,\fi\right\@secondoftwo#1 $%
     }\null \;\vbox{\kern\ht\@ne\box\tw@}%
   \endgroup
   }
\makeatother


\theoremstyle{plain}
\newtheorem{theorem}{Theorem}[section] 
\newtheorem{proposition}[theorem]{Proposition}
\newtheorem{lemma}[theorem]{Lemma}
\newtheorem{conjecture}[theorem]{Conjecture} 
\newtheorem{corollary}[theorem]{Corollary}

\theoremstyle{remark}
\newtheorem{definition}[theorem]{Definition}
\newtheorem{remark}[theorem]{Remark}

\usepackage[colorinlistoftodos]{todonotes}

\setlength{\marginparwidth}{2cm}

\begin{document} 

\title[Towards the affine and GIT quotients of the Borel moment map]{Towards the affine and geometric invariant theory quotients of the Borel moment map} 

\author{Mee Seong Im}
\address{Department of Mathematical Sciences, United States Military Academy, West Point, NY 10996 USA}
\email{meeseongim@gmail.com}

\author{Meral Tosun}
\address{Department of Mathematics, Galatasary University, Ortak\"oy 34357, Istanbul, Turkey}
\email{mrltosun@gmail.com}

\keywords{Grothendieck--Springer resolution, moment map, complete intersection, parabolic subgroup, geometric invariant theory, Hilbert scheme}
\subjclass[2010]{
Primary: 14M10, 
53D20, 
17B08,  
14L30. 
Secondary: 
14L24, 
20G20.}  

\date{\today}

\begin{abstract}
We study the Borel moment map $\mu_B:T^*(\mathfrak{b}\times \mathbb{C}^n)\rightarrow \mathfrak{b}^*$, given by $(r,s,i,j)\mapsto [r,s]+ij$, and describe our algorithm to construct the geometric invariant theory (GIT) quotients 
$\mu_B^{-1}(0)/\!\!/_{\det}B$ and $\mu_B^{-1}(0)/\!\!/_{\det^{-1}}B$, 
and the affine quotient $\mu_B^{-1}(0)/\!\!/B$. 
We also provide an insight of the singular locus of $2^n$ irreducible components of $\mu_B$. 
Finally, analogous to the Hilbert--Chow morphism,  
we discuss that the GIT quotient for the Borel setting is a resolution of singularities. 
\end{abstract}  

\maketitle 



\section{Introduction}\label{section:intro} 

Parabolic equivariant geometry frequently appears in algebraic geometry, representation theory, and mathematical physics. They generalize equivariant morphisms for reductive groups, with multitude of connections to quiver Hecke algebras (cf.~\cite{MR2525917,MR2763732,MR2908731,rouquier2008}), isospectral Hilbert schemes (cf.~\cite{MR1839919,MR3836769}), flag Hilbert schemes (cf.~\cite{GNR16}), and partial  (Grothendieck--)Springer resolutions (cf.~\cite{MR2838836,Nevins-GSresolutions,MR3836769,MR3312842}).

We will work over the set of complex numbers. 
Let $G=GL_n(\mathbb{C})$, the set of $n\times n$ invertible matrices over $\mathbb{C}$, and let $P$ be a parabolic group of $G$ consisting of invertible block upper triangular matrices. Let $B$ be the set of invertible upper triangular matrices in $G$. Note that $P\supseteq B$.   
Let $\mathfrak{g}= \Lie(G)$, $\mathfrak{p}=\Lie(P)$, and $\mathfrak{b}=\Lie(B)$. 

Consider the matrix variety $\mathfrak{b}\times \mathbb{C}^n$, and its cotangent bundle $T^*(\mathfrak{b}\times \mathbb{C}^n)$, where we make the following identification: 
\[ 
T^*(\mathfrak{b}\times \mathbb{C}^n) = \mathfrak{b}\times \mathfrak{b}^*\times \mathbb{C}^n \times (\mathbb{C}^n)^*, 
\] 
where $\mathfrak{b}^*\stackrel{\tr}{\cong}\mathfrak{g}/\mathfrak{u}^+$ and $\mathfrak{u}^+$ is the nilpotent radical in $\mathfrak{b}$. 
There is a natural $B$-action on $\mathfrak{b}\times \mathbb{C}^n$ via $b.(r,i)=(brb^{-1}, bi)$. Taking the derivative of this action gives us 
\[ 
a:\mathfrak{b}\rightarrow \Gamma(T_{\mathfrak{b}\times \mathbb{C}^n}) \subseteq \mathbb{C}[T^*(\mathfrak{b}\times \mathbb{C}^n)], \quad 
\mbox{ given by } 
\quad 
a(v)(r,i) = \frac{d}{dt}\left(g_t.(r,i)\right)\Big|_{t=0} = ([v,r],vi), 
\] 
where $g_t = \exp(tv)$. We dualize $a$ to obtain the moment map 
\begin{equation}
\label{eqn:Borel-moment-map} 
\mu_B= a^*: T^*(\mathfrak{b}\times \mathbb{C}^n)\rightarrow \mathfrak{b}^*, 
\quad 
\mbox{ where } 
\quad  
(r,s,i,j)\mapsto [r,s]+ ij. 
\end{equation} 
Note that the $B$-action is induced onto the cotangent bundle, giving us 
\[ 
B\:\CircleArrowleft\: T^*(\mathfrak{b}\times \mathbb{C}^n) 
\quad 
\mbox{ via }
\quad  
b.(r,s,i,j) = (brb^{-1}, bsb^{-1}, bi, jb^{-1}). 
\]

In \cite{MR3836769}, the first author restricts to the regular semisimple locus $\mu_B^{-1}(0)^{\rss}$ of the components of $\mu_B$, i.e., $r$ has pairwise distinct eigenvalues, and studies its affine quotient $\mu_B^{-1}(0)^{\rss}/\!\!/B$. The author shows that the affine quotient is isomorphic to $\mathbb{C}^{2n}\setminus \Delta$, where $\Delta=\{(a_1,\ldots,a_n,0,\ldots, 0): 
a_{\iota} = a_{\gamma} \mbox{ for some }\iota\not=\gamma\}$ (cf. \cite[Thm.~1.6]{MR3836769}). 
So this locus $\mu_B^{-1}(0)^{\rss}/\!\!/B$ is contained in the smooth locus of the affine quotient, which we will denote by $X^{\sm}$. 
In this construction, $B$-invariant polynomials are explicitly given for the affine quotient.

We now recall a description of the irreducible components of $\mu_B$ (see \cite[Prop.~4.2]{Nevins-GSresolutions}). 
But first, we state \cite[Lemma~4.1]{Nevins-GSresolutions}:
\begin{lemma}[Nevins, Lemma 4.1] 
Suppose $r$ is a diagonal $n\times n$ matrix with pairwise distinct eigenvalues. Let $i\in \mathbb{C}^n$ and $j\in (\mathbb{C}^n)^*$. Then given diagonal entries $s_{aa}$ for an $n\times n$ matrix $s$, there is a unique $s$ satisfying $[r,s]+ij=0$. In particular, if $\overline{s}\in \mathfrak{g}/\mathfrak{u}^+$ and $\mu_B(r,\overline{s},i,j)=0$, then there is a unique lift of $\overline{s}$ to $s\in\mathfrak{g}$ such that $\mu_G(r,s,i,j)=0$, where $\mu_G:T^*(\mathfrak{g}\times \mathbb{C}^n)\rightarrow \mathfrak{g}^*\stackrel{\tr}{\cong} \mathfrak{g}$.  
\end{lemma}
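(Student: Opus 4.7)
The plan is to exploit the fact that when $r = \mathrm{diag}(r_1, \ldots, r_n)$ has pairwise distinct eigenvalues, the adjoint operator $[r, \cdot]$ acts diagonally on matrix entries: $[r,s]_{ab} = (r_a - r_b) s_{ab}$. In particular $[r,s]$ has zero diagonal, and its off-diagonal entries differ from those of $s$ by the nonzero scalars $r_a - r_b$. This decouples the matrix equation $[r,s] + ij = 0$ into $n^2$ independent scalar equations, one per matrix position.

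First I would treat the statement about the full equation in $\mathfrak{g}$. For each pair $(a,b)$ with $a \neq b$, the $(a,b)$-entry of $[r,s] + ij = 0$ reads $(r_a - r_b) s_{ab} + i_a j_b = 0$, and distinctness of the eigenvalues forces $s_{ab} = -i_a j_b/(r_a - r_b)$ uniquely. The diagonal equations become $i_a j_a = 0$ and involve no entry of $s$ at all, so the entries $s_{aa}$ are free. Hence, once the diagonal entries $s_{11}, \ldots, s_{nn}$ are prescribed, the matrix $s$ satisfying $[r,s] + ij = 0$ is uniquely determined; the only hidden compatibility is $i_a j_a = 0$ for all $a$, which will be automatic in the setting of the second assertion.

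For the second assertion I would use the canonical section of the projection $\mathfrak{g} \twoheadrightarrow \mathfrak{g}/\mathfrak{u}^+$ that sends a class to its unique lower triangular representative; this is the splitting dual, under the trace pairing, to the decomposition $\mathfrak{g} = \mathfrak{b} \oplus \mathfrak{u}^-$. The hypothesis $\mu_B(r, \overline{s}, i, j) = 0$ in $\mathfrak{b}^* = \mathfrak{g}/\mathfrak{u}^+$ says exactly that the $(a,b)$-entries of $[r,s] + ij$ vanish for all $a \geq b$ (in particular $i_a j_a = 0$), and any lift $s \in \mathfrak{g}$ of $\overline{s}$ is parameterised by the strictly upper triangular entries $s_{ab}$ with $a < b$. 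Imposing $\mu_G(r, s, i, j) = 0$ in $\mathfrak{g}$ then forces $(r_a - r_b) s_{ab} + i_a j_b = 0$ for $a < b$, yielding the unique solution $s_{ab} = -i_a j_b/(r_a - r_b)$ by the same entry-wise argument as above.

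The proof reduces to a direct entry-wise computation, so there is no serious technical obstacle; the content of the lemma is simply that distinctness of the eigenvalues of $r$ makes $\ad(r)$ invertible on every off-diagonal matrix unit. The only step that deserves brief care is the identification $\mathfrak{b}^* \cong \mathfrak{g}/\mathfrak{u}^+$ together with the choice of lower triangular lift, since ``lifting $\overline{s}$'' only acquires a definite meaning once such a section has been fixed.
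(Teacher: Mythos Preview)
The paper does not supply its own proof of this lemma; it is quoted from \cite{Nevins-GSresolutions} without argument. Your entry-wise computation is correct and is the standard (indeed essentially the only) route: for diagonal $r$ one has $[r,s]_{ab}=(r_{aa}-r_{bb})s_{ab}$, so the off-diagonal entries of $s$ are forced to be $s_{ab}=-i_aj_b/(r_{aa}-r_{bb})$, which is exactly the formula recorded in Proposition~\ref{prop:Nevins-Prop-4.2}. Your handling of the second assertion is also fine once one notes, as you implicitly use, that $[r,\mathfrak{u}^+]\subseteq\mathfrak{u}^+$ (because $r\in\mathfrak{b}$), so that $[r,\overline{s}]\in\mathfrak{g}/\mathfrak{u}^+$ is well defined and the hypothesis $\mu_B(r,\overline{s},i,j)=0$ genuinely constrains only the entries with $a\geq b$.
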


\begin{proposition}[Nevins, Proposition 4.2] 
\label{prop:Nevins-Prop-4.2}
For $n\leq 5$, the irreducible components of $\mu_B^{-1}(0)$ are the closures of subsets $\mathcal{C}_{\ell}$, where $\ell:\{ 1,\ldots, n\}\rightarrow \{ 0,1\}$ is a function. The subset $\mathcal{C}_{\ell}$ consists of the orbits of quadruples $(r,\overline{s},i,j)$, where $r$ is diagonal and has pairwise distinct eigenvalues, $i_k=\ell(k)$, $j_k=1-\ell(k)$, and $\overline{s}$ is the image of a matrix $s\in \mathfrak{g}$ in $\mathfrak{g}/\mathfrak{u}^+$ that has arbitrary diagonal entries and off-diagonal entries 
\[ 
s_{ab} = -\frac{(ij)_{ab}}{r_{aa} - r_{bb}}
\] 
for $a\not=b$. 
\end{proposition}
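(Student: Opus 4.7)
The plan is to work on the $B$-stable open subset $U \subset T^*(\mathfrak{b}\times \mathbb{C}^n)$ where the $\mathfrak{b}$-component $r$ has $n$ pairwise distinct eigenvalues. Any upper triangular $r$ with distinct diagonal entries is conjugate by an element of $B$, unique up to the diagonal torus, to its diagonal part, so every $B$-orbit in $U$ has a representative with $r$ diagonal. For such $r$, the preceding lemma gives a unique lift of $\overline{s}$ to $s\in\mathfrak{g}$ with $[r,s]+ij=0$ in $\mathfrak{g}$. Expanding this equation component-wise yields, on the diagonal, $i_a j_a = 0$ for every $a$; off the diagonal, $(r_{aa}-r_{bb})s_{ab} = -(ij)_{ab}$, which determines $s_{ab}$ via the formula in the statement; and the diagonal entries $s_{aa}$ remain free. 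This is the entire set of equations cutting out $\mu_B^{-1}(0)\cap U$ when $r$ is put in diagonal form.

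Next I would partition the slice by the combinatorial function $\ell\colon\{1,\ldots,n\}\to\{0,1\}$ recording, at each index $a$, which of $\{i_a,j_a\}$ is the nonzero entry, normalizing it to $1$ via the diagonal torus (with $\ell(a)=1$ meaning $i_a=1$, $j_a=0$). This yields $2^n$ locally closed slices $S_\ell$, each isomorphic to the regular locus of a diagonal matrix (dimension $n$) crossed with the free diagonal of $s$ (dimension $n$); the degenerate locus where $i_a=j_a=0$ at some index lies in the closures of these strata. The generic $B$-stabilizer of a point of $S_\ell$ is trivial, since the torus that fixes diagonal $r$ is rigidified by the normalization of $i,j$. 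Hence $\mathcal{C}_\ell = B\cdot S_\ell$ is the image of the irreducible variety $B\times S_\ell$ under a morphism, so it is irreducible of dimension $\dim B + 2n = n(n+1)/2 + 2n = n(n+5)/2$. A direct count shows this matches the expected complete-intersection dimension $\dim T^*(\mathfrak{b}\times\mathbb{C}^n) - \dim\mathfrak{b}^* = (n^2+3n) - n(n+1)/2 = n(n+5)/2$, so each closure $\overline{\mathcal{C}_\ell}$ is a top-dimensional irreducible component of $\mu_B^{-1}(0)$, and the $2^n$ closures are pairwise distinct because the function $\ell$ is a $B$-invariant of the generic point.

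To conclude that these are all the irreducible components of $\mu_B^{-1}(0)$, it remains to verify that no component is supported entirely on the closed locus $Z = \mu_B^{-1}(0)\setminus U$ where $r$ has a repeated eigenvalue; equivalently, that $\dim Z < n(n+5)/2$. I would stratify $Z$ by the partition of $n$ recording the multiplicities of the eigenvalues of $r$ and, on each stratum, compute the dimension by combining the dimension of the $r$-locus (which shrinks as multiplicities grow), the enlarged centralizer of $r$ in $B$, and the extra constraints that $[r,s]+ij\in\mathfrak{u}^+$ imposes on $\overline{s}$ once blocks of $r$ collide. This is the main obstacle of the proof: the bookkeeping is combinatorial, and the hypothesis $n\leq 5$ is presumably the threshold at which every such stratum stays below top dimension. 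I would expect the critical case to come from a partition whose Richardson nilpotent orbit has large codimension in $\mathfrak{b}$, such as $(2,2,\ldots)$ or maximally-degenerate blocks, and identifying exactly which partition first defeats the dimension bound for $n\geq 6$ is the delicate point where the argument cannot be extended verbatim.
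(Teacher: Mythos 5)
You should first note that the paper itself gives no proof of this statement: it is quoted, with attribution, from Nevins (and its parabolic generalization is cited from Im--Scrimshaw), so the comparison is with the cited argument rather than with anything in this manuscript. Your first two paragraphs are essentially right and match the standard approach: on the open locus where $r$ is regular semisimple you may conjugate $r$ into diagonal form by $B$; the condition $[r,s]+ij\equiv 0 \pmod{\mathfrak{u}^+}$ then gives $i_aj_a=0$ on the diagonal and $s_{ab}=-(ij)_{ab}/(r_{aa}-r_{bb})$ below the diagonal, with the strictly upper part supplied uniquely by the preceding lemma; the slice--stabilizer count gives each $\mathcal{C}_\ell$ dimension $\binom{n+1}{2}+2n=n(n+5)/2$, which is the expected dimension $\dim T^*(\mathfrak{b}\times\mathbb{C}^n)-\dim\mathfrak{b}^*$; and since the degenerate points with $i_a=j_a=0$ lie in closures of the strata, every irreducible component meeting the regular semisimple locus is one of the $2^n$ pairwise distinct closures $\overline{\mathcal{C}}_\ell$ (distinctness because the normalized representative, hence $\ell$, is uniquely determined by the orbit).

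The genuine gap is your third paragraph, which is precisely the step where the hypothesis $n\leq 5$ is used: you must show that no irreducible component of $\mu_B^{-1}(0)$ is supported entirely over the locus where $r$ fails to be regular semisimple, and you only sketch a stratification by eigenvalue multiplicities and say the bound ``presumably'' holds. That is not a proof, and the proposed stratification is not the mechanism that makes the estimate work: within a fixed multiplicity type the $B$-conjugacy class of $r$ still varies (its nilpotent part relative to $B$), so the bookkeeping you describe does not close up on its own. The argument in the cited sources instead stratifies $\mu_B^{-1}(0)$ by the $B$-adjoint orbits in $\mathfrak{b}$, which are finite in number exactly when $n\leq 5$, and bounds the dimension of each stratum as (dimension of the orbit of $r$) plus (dimension of the fiber over a fixed $r$); this is what the present paper alludes to when it says the method breaks down once the adjoint action on the parabolic has infinitely many orbits. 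Without this finiteness input (or a substitute estimate over the non-regular locus), your argument only shows that the $\overline{\mathcal{C}}_\ell$ are among the irreducible components, not that they exhaust them. Your closing guess that some partition ``defeats the dimension bound for $n\geq 6$'' also conflates failure of the method with failure of the statement; what is known to fail for $n\geq 6$ is the orbit-finiteness that drives the proof.
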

Thus $\mathcal{C}_{\ell}$, as discussed in Proposition~\ref{prop:Nevins-Prop-4.2}, enumerate $2^n$ irreducible components of $\mu_B^{-1}(0)$. 
We will write $\overline{\mathcal{C}}_0$ to be the closure of the irreducible component enumerated by $\ell(k)=0$ for all $1\leq k\leq n$, i.e., $i=0\in \mathbb{C}^n$ while $j=(1,\ldots, 1)\in (\mathbb{C}^n)^*$. 
 
Note that the first author has verified using \texttt{Macaulay2} (cf.~\cite{GS}) and computational algebraic geometry techniques that $\mu_B$ is a complete intersection for up to $n=5$.

Furthermore, Im--Scrimshaw in \cite{Im-Scrimshaw-parabolic} prove that given a parabolic subalgebra with at most $5$ Jordan blocks, the components of $\mu_P$ form a complete intersection, where $\mu_P:T^*(\mathfrak{p}\times \mathbb{C}^n)\rightarrow \mathfrak{p}^*$, which is a generalization of the Borel moment map $\mu_B$ given in \eqref{eqn:Borel-moment-map}. The irreducible components are enumerated, where the vector $i$ and the covector $j$ are $\{0,1\}$-vectors (cf. \cite[Thm.~1.1]{Im-Scrimshaw-parabolic}), which are similar to Proposition~\ref{prop:Nevins-Prop-4.2}, and are equidimensional (cf.~\cite[Thm.~1.2]{Im-Scrimshaw-parabolic}).

In this manuscript, we specialize when $P=B$ and provide our program to describe the entire affine quotient $\mu_B^{-1}(0)/\!\!/B$, and the geometric invariant theory (GIT) quotients $\mu_B^{-1}(0)/\!\!/_{\det}B$ and $\mu_B^{-1}(0)/\!\!/_{\det^{-1}}B$ (see \S\ref{subsection:GIT} for the definition of affine and GIT quotients). 
Although we have explicitly constructed $B$-invariant polynomials and $B$-semi-invariant polynomials, it is difficult to show that they generate the $B$-invariant subalgebra and the semi-invariant module, respectively. 

Moreover, although the regular semisimple results in \cite{MR3836769} hold for all $n$, the parabolic subalgebra results in \cite{Im-Scrimshaw-parabolic} hold for less than or equal to $5$ Jordan blocks since the authors are not aware of appropriate methods to tackle the case when the $P$-adjoint action on $\mathfrak{p}$ has infinitely-many orbits (cf.~proof of \cite[Prop.~4.2]{Im-Scrimshaw-parabolic}).  One of the key assumptions in \cite{MR3836769} is that we restrict to the locus where $r$ has pairwise distinct eigenvalues, which makes studying parabolic equivariant geometry (for any $n$) straightforward since $r$ is diagonalizable. However, for the entire Borel subalgebra, the geometry is no longer clear. 
Thus, we assume throughout this manuscript that $n\leq 5$.

We also describe the singular locus of the irreducible components of $\mu_B$ but since they are currently difficult to generalize for large $n$, we omit the computation.

\section*{Acknowledgement}
The first author thanks the Department of Mathematics at Galatasary University for warm hospitality in February 2019 when this project was initiated, and January 2020 when this project was completed.  
M.S.I. is partially funded by National Academy of Sciences in Washington, D.C.

\section{Parabolic equivariant geometry}
\label{section:parabolic-equiv-geom}

\subsection{Parabolic invariant functions}
\label{subsection:parabolic-equivariant-geom}
We begin with a preliminary background. 
We will write $\diag(r)=(r_{11},\ldots,r_{nn})$ to be an $n\times n$ diagonal matrix whose (ordered) coordinates along the diagonal are $r_{11},\ldots, r_{nn}$. 

\begin{lemma}
\label{lemma:B-inv}
We have $\mathbb{C}[\mathfrak{b}]^B\cong \mathbb{C}[r_{11},\ldots, r_{nn}]$. 
\end{lemma}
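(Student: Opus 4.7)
The plan is to use the projection $\pi: \mathfrak{b} \to \mathfrak{b}/\mathfrak{u}^+$, which I identify with the space of diagonal matrices so that $\pi(r) = \diag(r_{11}, \ldots, r_{nn})$, and to argue that pullback along $\pi$ identifies $\mathbb{C}[r_{11},\ldots,r_{nn}]$ with $\mathbb{C}[\mathfrak{b}]^B$. One inclusion is immediate: since $b \in B$ is upper triangular, direct computation yields $(brb^{-1})_{aa} = b_{aa} r_{aa} b_{aa}^{-1} = r_{aa}$ for each $a$, so every coordinate $r_{aa}$ is $B$-invariant and $\pi^*\mathbb{C}[r_{11},\ldots,r_{nn}] \subseteq \mathbb{C}[\mathfrak{b}]^B$.

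For the reverse inclusion, the key step is to show that over the open dense regular locus of diagonal matrices with pairwise distinct entries, each fiber of $\pi$ is a single $B$-orbit -- in fact a single $U^+$-orbit, where $U^+ \subset B$ is the unipotent radical with Lie algebra $\mathfrak{u}^+$. Fix a regular diagonal $D$, so that the fiber $\pi^{-1}(D) = D + \mathfrak{u}^+$ is an affine space of dimension $\tfrac{n(n-1)}{2}$. The orbit map $U^+ \to D + \mathfrak{u}^+$, $u \mapsto uDu^{-1}$, has trivial stabilizer, and its differential at the identity is $X \mapsto [X,D]$, which is diagonal on the standard basis $\{E_{ab} : a<b\}$ of $\mathfrak{u}^+$ with nonzero eigenvalues $D_{aa}-D_{bb}$, hence a linear isomorphism. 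By $U^+$-equivariance the orbit map is \'etale everywhere; being a bijective \'etale morphism between smooth irreducible varieties of equal dimension, it is an isomorphism. Granted this, any $f \in \mathbb{C}[\mathfrak{b}]^B$ is constant on each generic fiber of $\pi$, so the restriction $g$ of $f$ to the space of diagonal matrices satisfies $f = g \circ \pi$ on a dense open set and hence everywhere, placing $f$ in $\mathbb{C}[r_{11},\ldots,r_{nn}]$.

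The main obstacle I anticipate is the orbit-equals-fiber step: the infinitesimal computation alone only forces the $U^+$-orbit through $D$ to be open and dense in $\pi^{-1}(D)$. To upgrade density to equality I would either cite Zariski's main theorem applied to the bijective \'etale orbit map, or more concretely solve $(I+X)D = (D+N)(I+X)$ for $X \in \mathfrak{u}^+$ in terms of $D$ and the strictly upper triangular part $N$ of $r$ by a triangular recursion on the entries of $X$. The recursion is feasible precisely because $\ad(D)$ is diagonal in the $E_{ab}$-basis with invertible eigenvalues, and it has the added benefit of exhibiting an explicit inverse to the orbit map.
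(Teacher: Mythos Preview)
Your proof is correct but follows a genuinely different route from the paper. The paper's argument is a one-line torus degeneration: it uses the $1$-parameter subgroup $\lambda(t)=\diag(t^{n-1},t^{n-2},\ldots,1)\subset B$, for which $(\lambda(t)\,r\,\lambda(t)^{-1})_{\iota\gamma}=t^{\gamma-\iota}r_{\iota\gamma}$, so that $\lim_{t\to 0}\lambda(t).r=\diag(r)$ for \emph{every} $r\in\mathfrak{b}$. Since an invariant polynomial is constant on orbit closures, this immediately forces any $f\in\mathbb{C}[\mathfrak{b}]^B$ to depend only on the diagonal entries. By contrast, you argue that over the regular diagonal locus each fiber $D+\mathfrak{u}^+$ of $\pi$ is a single $U^+$-orbit, deduce that invariants factor through $\pi$ on a dense open, and extend by continuity. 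The paper's approach is shorter and uniform in $r$, requiring no genericity hypothesis, no \'etale/ZMT input, and no recursion. Your approach, while heavier, proves the stronger geometric statement that regular fibers are $U^+$-homogeneous (and your proposed recursion for $X$ in $(I+X)D=(D+N)(I+X)$, inducting on $b-a$, does close the surjectivity gap you flagged). One small correction: the differential of $u\mapsto uDu^{-1}$ at the identity sends $E_{ab}$ to $(D_{bb}-D_{aa})E_{ab}$, not $(D_{aa}-D_{bb})E_{ab}$; the sign is immaterial to your conclusion.
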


\begin{proof}
For $1 \leq \iota\leq n$, define a map $f_{\iota} \colon \fb \to \CC$, where $f_{\iota}(r) = r_{\iota\iota}$. 
For $b\in B$, we have $b.f_{\iota}(r) = f_{\iota}(b^{-1}rb) = r_{\iota\iota}$. 
So $\CC[f_1,\ldots, f_n] \subseteq \CC[\fb]^B$. 
Now, let $\lambda \colon \CC^* \to B$ be a $1$-parameter subgroup defined by 
\begin{equation}
\lambda(t)_{\iota\gamma} = 
\begin{cases}
t^{n-a} \qquad& \text{if } \iota=\gamma = a,  \\
\:\:\delta_{\iota\gamma} \qquad& \text{otherwise},
\end{cases}
\end{equation} 
where $\delta_{\iota\gamma}$ is the Kronecker delta.
Then 
\[ 
(\lambda(t).r)_{\iota\gamma}= (\lambda(t)r\lambda(t)^{-1})_{\iota\gamma}  
=
\begin{cases}
\:\:\:\: r_{\iota\iota} \qquad& \text{if } \iota = \gamma, \\ 
r_{\iota\gamma} t^{\gamma-\iota} \qquad& \text{if } \iota < \gamma, \\ 
\:\:\:\:\: 0 \qquad& \text{if } \iota > \gamma. \\ 
\end{cases} 
\] 
So 
\[
\lim_{t\rightarrow 0}(\lambda(t).r)_{\iota\gamma} = 
\begin{cases}
r_{\iota\iota} \qquad  &\mbox{ if } \iota=\gamma, \\ 
0 \qquad &\mbox{ if } \iota\not=\gamma. \\
\end{cases}
\] 
Since the off-diagonal entries of $r$ are zero, a $B$-invariant polynomial is independent of the off-diagonal coordinate functions. 
Thus $\mathbb{C}[\mathfrak{b}]^B \cong \mathbb{C}[\diag(r)]$. 
\end{proof}

\begin{lemma}
\label{lemma:B-dual-inv}
We have $\mathbb{C}[\mathfrak{b}^*]^B\cong \mathbb{C}[\tr(s)]$. 
\end{lemma}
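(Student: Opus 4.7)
The plan is to mimic the two-stage strategy of Lemma~\ref{lemma:B-inv}: first use a one-parameter subgroup of the diagonal torus to reduce any invariant to a polynomial in the diagonal entries of $s$, and then use the unipotent transvections inside $B$ to force the further reduction to a polynomial in $\tr(s)$. For the setup, I would identify $\mathfrak{b}^* \stackrel{\tr}{\cong} \mathfrak{g}/\mathfrak{u}^+$ by taking lower triangular matrices (including the diagonal) as representatives of the quotient. Under this identification the coadjoint action becomes $b.\overline{s} = \overline{bsb^{-1}}$, where $\overline{\cdot}$ denotes the class in $\mathfrak{g}/\mathfrak{u}^+$. Since elements of $\mathfrak{u}^+$ are strictly upper triangular and hence traceless, $\tr(s)$ descends to a well-defined polynomial on $\mathfrak{b}^*$, and cyclicity of the trace yields the containment $\mathbb{C}[\tr(s)] \subseteq \mathbb{C}[\mathfrak{b}^*]^B$ immediately.

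For the reverse inclusion, I would first apply a torus reduction. Using a one-parameter subgroup $\lambda \colon \mathbb{C}^* \to B$ given by $\lambda(t)_{aa} = t^{a-1}$ (i.e., the opposite weight convention to Lemma~\ref{lemma:B-inv}), one computes $(\lambda(t) s \lambda(t)^{-1})_{\iota\gamma} = t^{\iota-\gamma} s_{\iota\gamma}$. The strictly lower triangular entries ($\iota > \gamma$) then carry positive $t$-weight and vanish as $t \to 0$, while the diagonal is preserved. For any $B$-invariant polynomial $f$, continuity forces $f(s) = \lim_{t \to 0} f(\lambda(t).s) = f(\diag(s))$, so $\mathbb{C}[\mathfrak{b}^*]^B \subseteq \mathbb{C}[s_{11}, \ldots, s_{nn}]$.

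To finish, I would apply the unipotent transvections $u = I + \alpha E_{ij} \in B$ for $1 \leq i < j \leq n$ and $\alpha \in \mathbb{C}$. A short calculation using $E_{ij}^2 = 0$ shows that the diagonal of $usu^{-1}$ agrees with that of $s$ except that $(usu^{-1})_{ii} = s_{ii} + \alpha s_{ji}$ and $(usu^{-1})_{jj} = s_{jj} - \alpha s_{ji}$. Since $f$ depends only on the diagonal, invariance under $u$ yields the polynomial identity
\[
f(\ldots, s_{ii} + \alpha s_{ji}, \ldots, s_{jj} - \alpha s_{ji}, \ldots) = f(\ldots, s_{ii}, \ldots, s_{jj}, \ldots).
\]
Letting $\alpha$ and $s_{ji}$ vary independently, the product $\beta := \alpha s_{ji}$ ranges over all of $\mathbb{C}$, so $f$ is invariant under every translation $s_{ii} \mapsto s_{ii} + \beta$, $s_{jj} \mapsto s_{jj} - \beta$. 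Ranging over $i < j$, these translations span the hyperplane $\{(c_1, \ldots, c_n) \in \mathbb{C}^n : \sum c_k = 0\}$ acting additively on diagonal coordinates, so $f \in \mathbb{C}[\tr(s)]$. The main obstacle is the bookkeeping in the unipotent step: one must carefully track which off-diagonal contributions of $usu^{-1}$ lie in $\mathfrak{u}^+$ (and get killed by the projection) versus which persist into the diagonal; once this diagonal formula is secured, the remainder is elementary root-system linear algebra.
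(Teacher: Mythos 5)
Your proof is correct, and its second half takes a genuinely different route from the paper's. The first step coincides: both you and the paper use the diagonal one-parameter subgroup $\lambda(t)=\diag(1,t,\ldots,t^{n-1})$ (the paper's $\lambda_1$), whose limit kills the strictly lower-triangular coordinates of the representative $s\in\mathfrak{g}/\mathfrak{u}^+$ and, by continuity of the invariant, reduces $F$ to a polynomial in $s_{11},\ldots,s_{nn}$; the containment $\mathbb{C}[\tr(s)]\subseteq\mathbb{C}[\mathfrak{b}^*]^B$ via cyclicity of the trace is also identical. Where you diverge is in collapsing the diagonal to the trace: the paper introduces a second, upper-triangular one-parameter subgroup $\lambda_2(t)$, computes $\lambda_2(t).s$ explicitly, and then makes carefully tailored choices of the (irrelevant) strictly lower entries $\{s_{\alpha\beta}\}_{\alpha>\beta}$ so that in the limit every diagonal entry vanishes except the $(n,n)$-entry, which becomes $\tr(s)$; constancy of $F$ on orbit closures then finishes the argument. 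You instead act by the unipotent transvections $u=\I+\alpha E_{ij}$, $i<j$, whose effect on the diagonal of the class of $usu^{-1}$ is exactly $(s_{ii},s_{jj})\mapsto(s_{ii}+\alpha s_{ji},\,s_{jj}-\alpha s_{ji})$ (your computation is right, and the discarded terms do lie in $\mathfrak{u}^+$ or off the diagonal), and you deduce invariance of $F$ under all translations by the trace-zero hyperplane spanned by the $e_i-e_j$, hence $F\in\mathbb{C}[\tr(s)]$. Your version buys transparency: it avoids limits and the somewhat delicate bookkeeping in the paper's display \eqref{eq:lambda2s} and the telescoping-sum choice of free variables, replacing them with a two-line matrix identity and elementary linear algebra on translation-invariant polynomials; the paper's version stays uniformly within the one-parameter-subgroup/orbit-closure technique it also uses for Lemma~\ref{lemma:B-inv}, so the two lemmas read in parallel. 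Both are complete proofs of Lemma~\ref{lemma:B-dual-inv}.
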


\begin{proof}
Let $F$ be a polynomial in $\mathbb{C}[\tr(s)]$. 
Since $F(\tr(bsb^{-1}))
= F(\tr(sb^{-1}b)) 
= F(\tr(s))$ for any $b\in B$, $F$ is in the $B$-invariant subring $\mathbb{C}[\mathfrak{b}^*]^B$.  

Now suppose $F\in \mathbb{C}[\mathfrak{b}^*]^B$ and let $s\in\mathfrak{b}^*$. 
Then for a $1$-parameter subgroup $\lambda_1(t)$ with coordinates 
\[ \lambda_1(t)_{\iota\gamma} = \left\{ 
\begin{aligned}
t^{\iota-1} 			& \mbox{ if } \iota =\gamma, \\ 
0  \hspace{2mm} 	& \mbox{ if } \iota \not=\gamma, \\ 
\end{aligned}
\right.
\]
we have 
\[ 
(\lambda_1(t).s)_{\iota\gamma} 
    = (\lambda_1(t)s(\lambda_1(t))^{-1})_{\iota\gamma}  
		= \left\{ 
\begin{aligned} 
* 							\hspace{4mm} 		&	\mbox{ if } \iota < \gamma, \\ 
s_{\iota\iota} 	\hspace{3mm} 		& \mbox{ if } \iota = \gamma, \\ 
t^{\iota-\gamma}s_{\iota\gamma} & \mbox{ if }\iota > \gamma.  \\
\end{aligned} 
\right.
\] Taking the limit as $t\rightarrow 0$, we have 
\[ \lim_{t\rightarrow 0} (\lambda_1(t).s)_{\iota\gamma} = 
\left\{ 
\begin{aligned} 
* 				\hspace{1mm}			&	\mbox{ if } \iota < \gamma, \\ 
s_{\iota\iota} 							& \mbox{ if } \iota = \gamma, \\ 
0 				\hspace{1mm}			& \mbox{ if }	\iota > \gamma.  \\
\end{aligned}
\right. 
\]   
Since off-diagonal entries of $s$ are zero, our $B$-invariant polynomial $F$ 
is independent of the coordinates $\{ s_{\iota\gamma} \}_{\iota > \gamma}$. 
Now consider another $1$-parameter subgroup $\lambda_2(t)$, where 
\[\lambda_2(t)_{\iota\gamma} 
= \left\{ 
\begin{aligned} 
t^{\iota-1} 		& \mbox{ if } \iota \leq \gamma,  \\ 
0 \hspace{2mm}	&	\mbox{ if } \iota > \gamma. \\
\end{aligned} 
\right.
\] 
Then $(\lambda_2(t).s)_{\iota\gamma} = $ 
\begin{equation}\label{eq:lambda2s} 
= \left\{ 
\begin{aligned} 
* \hspace{20mm}&\mbox{ if }\iota < \gamma,  \\
t^{\iota-\gamma} \left( \sum_{k=\iota}^n s_{k \gamma} - \sum_{k=\iota}^n s_{k,\gamma-1}\right) 
 &\mbox{ if } \iota \geq \gamma, \\ 
\end{aligned} 
\right. 
\hspace{3mm}
= \hspace{3mm}
\left\{ 
\begin{aligned} 
* \hspace{20mm}& \mbox{ if } \iota < \gamma, \\
s_{\iota\iota}-\sum_{k=\iota}^n s_{k,\iota -1}+ \sum_{k=\iota+1}^n s_{k\iota}\hspace{4mm}				 & \mbox{ if } \iota = \gamma,  \\ 
t^{\iota-\gamma} \left( \sum_{k=\iota}^n s_{k\gamma} - \sum_{k=\iota}^n s_{k,\gamma-1} \right)  & \mbox{ if }  \iota > \gamma.  \\ 
\end{aligned} 
\right. 
\end{equation} 
Since $F(s)=F(s')$ for any $s'\in \overline{B.s}$ (the polynomial $F$ must take the same value on any orbit closure), the equality 
$\displaystyle{\lim_{t\rightarrow 0}F(\lambda_1(t).s)}
= 
\displaystyle{\lim_{t\rightarrow 0}F(\lambda_2(t).s)}$ must hold for any values of  
 $\{ s_{\alpha\beta}\}_{\alpha>\beta}$. 
So for each $1\leq \iota < n$ (starting with $\iota=1$ in ascending order),  
choose $\{ s_{k\iota}\}_{k>\iota}$
in $\displaystyle{\sum_{\iota < k\leq n} s_{k\iota}}$ such that 
\begin{equation}\label{eq:choosing-free-vars}
-\sum_{k=\iota+1}^n s_{k\iota}=s_{\iota\iota}-\sum_{k=\iota}^n s_{k,\iota-1}.  
\end{equation} 

Move the sum in (\ref{eq:choosing-free-vars}) to the left-hand side so that   
\[
\displaystyle{\sum_{\iota-1<k\leq n}s_{k,\iota-1}} 
-
\displaystyle{\sum_{\iota < k\leq n} s_{k,\iota}} = s_{\iota\iota} \quad \mbox{ for each } 1\leq \iota <n. 
\]  
This implies 
the sum of all such sum as $\iota$ varies from $1$ to $n-1$ is 
\[\begin{aligned}
\sum_{\iota=1}^{n-1} \left(\sum_{k=\iota}^n s_{k,\iota-1} -\sum_{k=\iota+1}^n s_{k \iota}\right) 
		& = \sum_{k=1}^n \not{s_{k0}} 
				+ \left(\sum_{\iota=2}^{n-1} \sum_{k=\iota}^n s_{k,\iota-1}
					  -\sum_{\iota=1}^{n-2} \sum_{ k=\iota+1 }^n s_{k\iota}  \right)
		   			-\sum_{k=n}^n s_{k,n-1} \\
		& = -\sum_{k=n}^n s_{k,n-1} = \sum_{\iota=1}^{n-1} s_{\iota\iota}. \\
\end{aligned}
\] 

By (\ref{eq:lambda2s}) and by choosing appropriate choices for $\{ s_{\alpha\beta}\}_{\alpha > \beta}$ in 
(\ref{eq:choosing-free-vars}), we have 
$(\lambda_2(t).s)_{\iota\iota}=0$ for each $1\leq \iota <n$ while  
\[
(\lambda_2(t).s)_{nn}=s_{nn}- 
\displaystyle{\sum_{n-1<k\leq n} s_{k,n-1}} = \tr(s).
\]   
This means all coordinate entries are zero except the $(n,n)$-entry, which is $\tr(s)$. 
Thus for $F$ in $\mathbb{C}[\mathfrak{b}^*]^B$, $F(s)$ must be of the form $F(s')$, where all coordinates of $s'$ are zero   
except the entry $s_{nn}'$, which equals $\tr(s)$. So $F$ is a polynomial in $\tr(s)$. 
\end{proof}

We will now generalize \cite[Defn~2.2]{MR3836769}. 
Let $[n]:=\{ 1,\ldots,n\}$.

\begin{definition}
\label{definition:ell-J}
Let $J\subseteq [n]$. 
Define 
\begin{equation}
\label{eqn:ellJ-matrix}
\ell^J := \ell^J(r)=\prod_{k\in J} (r-r_{kk}\I),   
\end{equation}
where $\I$ is the $n\times n$ identity matrix. 
Let $\tr^J :=  \tr(\ell^J)$. 
Then 
\[ 
L^J := L^J(r) = (\tr^J)^{-1} \ell^J. 
\] 
\end{definition}
In the case when $J=\{k\}$, we may write $\ell^k := \ell^k(r) = r - r_{kk}\I$, and when 
$J=\varnothing$, then $\ell^{J}:= \I$. 
Furthermore, we will write the coordinates of $\ell^J$ as $\ell^J_{\gamma\mu}$. 


\begin{lemma}
\label{lem:ellJ-B-invariant}
For any $b\in B$, $\ell^J(\Ad_b(r))=\Ad_b(\ell^J(r))$, where the adjoint action is by conjugation. 
\end{lemma}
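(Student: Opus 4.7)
The plan is to unpack the definition of $\ell^J$ and use two basic facts: that $\Ad_b$ is an algebra automorphism of $n\times n$ matrices, and that for $b\in B$ the diagonal entries of $r$ are preserved by conjugation, i.e., $(brb^{-1})_{kk}=r_{kk}$ for every $k\in[n]$.

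First I would verify the diagonal-preservation claim. Write $b=du$ with $d$ diagonal invertible and $u$ unipotent upper triangular; then $b^{-1}=u^{-1}d^{-1}$ is also upper triangular with diagonal $d^{-1}$. Since $r$ is upper triangular, $brb^{-1}$ is upper triangular as well (this is just $\Ad_B$-stability of $\fb$), and a direct matrix multiplication argument using the fact that only the upper-triangular portions contribute to the diagonal of a product of upper triangular matrices shows $(brb^{-1})_{kk}=d_{kk}r_{kk}d_{kk}^{-1}=r_{kk}$. Thus the scalars $r_{kk}$ appearing in the definition of $\ell^J$ are genuinely $\Ad_B$-invariant functions on $\fb$.

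Next, applying this to each factor in \eqref{eqn:ellJ-matrix} with input $\Ad_b(r)=brb^{-1}$,
\[
\Ad_b(r)-r_{kk}\I \;=\; brb^{-1}-(brb^{-1})_{kk}\I \;=\; b(r-r_{kk}\I)b^{-1} \;=\; \Ad_b(r-r_{kk}\I).
\]
Since $\Ad_b$ is multiplicative on matrix products (it is an algebra automorphism of $\mathfrak{g}$), and since the factors $\{r-r_{kk}\I\}_{k\in J}$ are polynomials in $r$ and hence pairwise commute so that the product $\prod_{k\in J}$ is unambiguous, I would conclude
\[
\ell^J(\Ad_b(r))=\prod_{k\in J}\Ad_b(r-r_{kk}\I)=\Ad_b\!\left(\prod_{k\in J}(r-r_{kk}\I)\right)=\Ad_b(\ell^J(r)).
\]

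There is no real obstacle here; the only subtle point is that one must use both diagonal-preservation (to equate the scalars appearing inside each factor on the two sides) and multiplicativity of $\Ad_b$ (to pull it outside the product). As an immediate corollary worth noting for later use, taking traces gives $\tr^J(\Ad_b(r))=\tr^J(r)$, so $\tr^J$ is a $B$-invariant scalar and consequently $L^J(\Ad_b(r))=\Ad_b(L^J(r))$ as well.
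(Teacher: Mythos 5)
Your proof is correct and takes essentially the same route as the paper's one-line computation: rewrite each factor as $b(r-r_{kk}\I)b^{-1}$ and use multiplicativity of conjugation to pull $\Ad_b$ outside the product. The only difference is that you explicitly verify the diagonal-preservation fact $(brb^{-1})_{kk}=r_{kk}$ for $b\in B$, $r\in\fb$, which the paper uses silently when it writes $r_{kk}$ in both $\ell^J(\Ad_b(r))$ and $\ell^J(r)$ — a worthwhile clarification, but not a different argument.
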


\begin{proof}
For any $b\in B$, 
\[ 
\ell^{J}(\Ad_b(r)) =   
\prod_{
k\in J
}    
   brb^{-1} - r_{kk}\I    
= b \left(     
   \prod_{k\in J} 
   \left(  r-r_{kk}\I 
 					 \right) 
  \right) b^{-1}   \\    
   = \Ad_b(\ell^{J}(r)).      
\] 
\end{proof}

\begin{corollary}
\label{cor:ellJ-B-invariant}
For any $b\in B$, $L^J(\Ad_b(r)) = \Ad_b(L^J(r))$, where conjugation is the adjoint action. 
\end{corollary}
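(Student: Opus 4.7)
The plan is to derive the corollary directly from Lemma \ref{lem:ellJ-B-invariant} together with the conjugation invariance (cyclicity) of the trace. Writing $L^J(r) = (\tr^J(r))^{-1}\ell^J(r)$, the matrix $L^J(r)$ is just $\ell^J(r)$ rescaled by a scalar. Since $\Ad_b$ is $\CC$-linear and therefore commutes with scalar multiplication, it suffices to check two things: (i) $\ell^J$ intertwines with $\Ad_b$, and (ii) the scalar $\tr^J(r)$ is $B$-invariant as a function of $r$.

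Step (i) is exactly the content of the preceding Lemma. For step (ii), I would combine the Lemma with the cyclic property of trace:
\[
\tr^J(\Ad_b(r)) = \tr\bigl(\ell^J(\Ad_b(r))\bigr) = \tr\bigl(\Ad_b(\ell^J(r))\bigr) = \tr\bigl(b\,\ell^J(r)\,b^{-1}\bigr) = \tr\bigl(\ell^J(r)\bigr) = \tr^J(r).
\]
Chaining (i) and (ii) then gives
\[
L^J(\Ad_b(r)) = (\tr^J(\Ad_b(r)))^{-1}\,\ell^J(\Ad_b(r)) = (\tr^J(r))^{-1}\,\Ad_b(\ell^J(r)) = \Ad_b\bigl((\tr^J(r))^{-1}\,\ell^J(r)\bigr) = \Ad_b(L^J(r)),
\]
which is the desired identity.

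I do not expect any substantive obstacle: this is the standard pattern whereby a scalar normalization preserves equivariance as soon as the normalizing scalar is itself invariant, so the result is essentially immediate from the preceding lemma. The only mild caveat to flag in a careful write-up is that one must restrict to the locus where $\tr^J(r)\neq 0$ in order for $(\tr^J)^{-1}$ to be defined; on that open subset of $\fb$ the argument above is unconditional, and outside of it the expression $L^J(r)$ itself is not defined, so there is nothing further to check.
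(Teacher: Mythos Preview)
Your proof is correct and follows essentially the same approach as the paper, which simply states that the corollary follows from Lemma~\ref{lem:ellJ-B-invariant}. You have merely made explicit the one additional (standard) ingredient, namely the conjugation invariance of the trace, and correctly noted the domain caveat that $\tr^J(r)\neq 0$.
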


\begin{proof}
This follows from Lemma~\ref{lem:ellJ-B-invariant}. 
\end{proof}

\subsection{Geometric invariant theory}
\label{subsection:GIT}

We refer to \cite{MR1304906,MR2537067} for extensive background in geometric invariant theory. 

Let $X$ be a variety (or a scheme) with an action by an algebraic group $G$. Then the $G$-invariant polynomial ring is defined as 
\begin{equation}
\mathbb{C}[X]^G := \{ f\in \mathbb{C}[X] : f(g.x)=f(x) \mbox{ for all }x\in X, g\in G\}. 
\end{equation}

Let $\chi:G\rightarrow \mathbb{C}^*$ be a character of $G$, i.e., a group homomorphism. Then $\chi$-semi-invariant module is defined to be
\begin{equation}
\mathbb{C}[X]^{G,\chi}:= \{f\in \mathbb{C}[X]: f(g.x)=\chi(g)f(x) \mbox{ for all }x\in X, g\in G\}. 
\end{equation}

The affine quotient of $X$ by $G$ is defined to be 
\begin{equation}
X/\!\!/G := \Spec(\mathbb{C}[X]^G), 
\end{equation}
while a GIT quotient (twisted by $\chi$) is defined as 
\begin{equation} 
X/\!\!/_{\chi}G := \Proj\left(\bigoplus_{i\geq 0} \mathbb{C}[X]^{G,\chi^i}\right). 
\end{equation} 

Thus, invariant and semi-invariant functions play fundamental roles in the geometric construction of quotient spaces in algebraic geometry. 

\section{The singular locus of the components of $\mu_B$}
\label{section:singular-locus}

We investigated the singular locus using \texttt{Singular} (cf.~\cite{DGPS}) for $n\leq 5$.

\begin{theorem}
\label{thm:singular-locus}
Each irreducible component $\overline{\mathcal{C}}_{\nu}$, for $1\leq \nu\leq 2^n$, is singular, whose singular locus has codimension $1$ in $\overline{\mathcal{C}}_{\nu}$. 
Thus, the singular locus $\mu_B^{-1}(0)^{\sing}$ of the components of $\mu_B$ has codimension $1$ in $\mu_B^{-1}(0)$. 
It follows that the irreducible components do not intersect transversely. 
\end{theorem}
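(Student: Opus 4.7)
The plan is to prove each assertion computationally using the Jacobian criterion, aided by \texttt{Singular}. The skeleton is: for each $\nu$, obtain defining equations of $\overline{\mathcal{C}}_\nu$, form the appropriate minor ideal of its Jacobian, and read off the codimension of the singular locus; then assemble the global picture and infer non-transversality.

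First, I would extract a generating set for the defining ideal $I_\nu \subset \mathbb{C}[T^*(\mathfrak{b}\times\mathbb{C}^n)]$ of $\overline{\mathcal{C}}_\nu$ from Proposition~\ref{prop:Nevins-Prop-4.2}. For the indexing function $\ell\colon[n]\to\{0,1\}$ associated to $\nu$, a dense subset of $\mathcal{C}_\ell$ is parameterized by tuples $(b,r_{11},\ldots,r_{nn},s_{11},\ldots,s_{nn})$ with $b\in B$ and pairwise distinct $r_{aa}$, via $(brb^{-1}, b\overline{s}b^{-1}, bi, jb^{-1})$, where $i_k=\ell(k)$, $j_k=1-\ell(k)$, and $\overline{s}_{ab}=-(ij)_{ab}/(r_{aa}-r_{bb})$ for $a\neq b$. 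Clearing denominators and eliminating these parameters inside \texttt{Singular} produces $I_\nu$.

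Next, with $N := n^2+3n = \dim T^*(\mathfrak{b}\times\mathbb{C}^n)$ and $d := n(n+5)/2$ the common dimension of the components (by the complete-intersection and equidimensionality results cited in the introduction), the Jacobian criterion realizes $\overline{\mathcal{C}}_\nu^{\sing}$ as the vanishing locus of $I_\nu$ together with the $(N-d)\times(N-d)$ minors of the Jacobian of a chosen generating set of $I_\nu$. I would compute this ideal in \texttt{Singular} for every $\ell$ and every $n \leq 5$, and verify that the resulting subvariety has codimension exactly $1$ in $\overline{\mathcal{C}}_\nu$. To match the machine-generated upper bound with a conceptual lower bound, I would exhibit an explicit codimension-$1$ family of singular points---for instance, the locus $\{r_{aa}=r_{bb}\}\cap \overline{\mathcal{C}}_\nu$ on which Nevins' formula degenerates, or a locus where enough coordinates of $i$ and $j$ vanish that the point also lies on a sibling component $\overline{\mathcal{C}}_{\nu'}$---and verify by hand that the Jacobian rank drops there. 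Summing across $\nu$ via the set-theoretic identity
\[
\mu_B^{-1}(0)^{\sing} \;=\; \bigcup_\nu \overline{\mathcal{C}}_\nu^{\sing} \;\cup\; \bigcup_{\nu\neq\nu'} \overline{\mathcal{C}}_\nu \cap \overline{\mathcal{C}}_{\nu'}
\]
promotes the componentwise codimension-$1$ bound to the claim for all of $\mu_B^{-1}(0)^{\sing}$.

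Finally, for the non-transversality consequence: a transverse meet of two equidimensional components $\overline{\mathcal{C}}_\nu$ and $\overline{\mathcal{C}}_{\nu'}$ of dimension $d$ inside the ambient $\mathbb{C}^N$ would force each component to be smooth along the common points and place their pairwise intersection in codimension $2(N-d) = n(n+1)$ of the ambient, equivalently codimension $n(n+1)/2$ in each component. The explicit codimension-$1$ loci constructed in the previous step include points where $\overline{\mathcal{C}}_\nu$ meets a sibling $\overline{\mathcal{C}}_{\nu'}$, so this intersection in fact occurs in codimension $1 \ll n(n+1)/2$ for $n\geq 2$, precluding transversality. The main obstacle I anticipate is the elimination step producing $I_\nu$: the parameter space has dimension $n(n+5)/2$, and eliminating it from the ambient $\mathbb{C}^{n^2+3n}$ causes Gr\"obner-basis complexity to blow up quickly with $n$, which is precisely why the statement (and the surrounding manuscript) is restricted to $n\leq 5$.
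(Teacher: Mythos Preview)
The paper does not actually supply a proof of this theorem. It states the result, notes at the top of \S\ref{section:singular-locus} that the singular locus was investigated in \texttt{Singular} for $n\le 5$, and in a remark records that the authors took the Jacobian of the moment-map polynomials $([r,s]+ij)_{\gamma\nu}$, set its $\binom{n+1}{2}\times\binom{n+1}{2}$ minors to zero, and omitted the details. So your proposal is being compared against a one-line computational sketch, not a written argument.

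Your plan is in the same spirit---computational, Jacobian-based, carried out in \texttt{Singular} for $n\le 5$---but is more structured than what the paper indicates. The main methodological difference is how you obtain the per-component ideals: you propose eliminating the $B\times\mathfrak{h}^2$ parameters from Nevins' description to produce each $I_\nu$, whereas the paper's remark works directly with the complete-intersection equations of $\mu_B^{-1}(0)$ (with the per-component statement presumably coming from a primary decomposition of that ideal, though this is not said). Your route is conceptually cleaner but, as you correctly flag, the elimination is the bottleneck; a primary decomposition of the moment-map ideal is almost certainly cheaper and is the more likely route the authors took. Your codimension argument for non-transversality is also more explicit than the paper's bare ``it follows.''

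One genuine soft spot: in the non-transversality paragraph you assert that the explicit codimension-$1$ singular loci you build contain points of $\overline{\mathcal{C}}_\nu\cap\overline{\mathcal{C}}_{\nu'}$, and hence that pairwise intersections already occur in codimension $1$. That is plausible (e.g.\ degenerating a single $i_k$ or $j_k$ to $0$ should land you in an adjacent component), but it is not established by what precedes it: you would still need to check that such a degeneration lies in the closure of both $\mathcal{C}_\ell$ and $\mathcal{C}_{\ell'}$ and really has codimension $1$ in each. Since you are working in \texttt{Singular} anyway, the cleanest fix is simply to compute $\dim(I_\nu+I_{\nu'})$ directly for adjacent $\nu,\nu'$ and read off the codimension, rather than trying to deduce it from the singular-locus computation.
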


\begin{remark}
Recall from \cite[Thm.~1.2]{Im-Scrimshaw-parabolic} that the irreducible components are equidimensional of dimension $\binom{n+1}{2}+2n$. So the singular locus in each irreducible component has dimension $\binom{n+1}{2}+2n-1$. We note that we do not have an isolated singularity but since the singular locus is of high dimension, we cannot yet explicitly report a meaningful structure about this locus. 
\end{remark}

  
 
\begin{remark}
To describe the singular locus explicitly, we found the Jacobian of the polynomials $\mu_B(r,s,i,j)_{\gamma\nu}$ for $n\geq \gamma\geq \nu\geq 1$ and then set $n(n+1)/2\times n(n+1)/2$ minors equal to zero to describe the singular locus (cf. \cite{DGPS, GS,Mathematica}). 
Since this locus is difficult to generalize for a general $n$, we omit the details.  
\end{remark}

\section{Affine quotient of the Borel moment map}
\label{section:affine-B}
Recall Definition~\ref{definition:ell-J}. 
Let $\iota \in [n]$ and let $J\subseteq [n]$. 
Let 
\begin{equation}
\label{eqn:B-inv-polynomials}
\begin{split}
f_J(r,s,i,j) &= \tr(j \ell^J i), \\ 
g_J(r,s,i,j) &= \tr(\ell^J s), \\ 
h_{\iota}(r,s,i,j) &= \tr(L^{[n]\setminus \{\iota\} } r ), \\ 
k_{n,J}(r,s,i,j) &=\tr(j \ell^{[n]\setminus \{n\}}s \ell^{J} i),  \\ 
l_{J,1}(r,s,i,j) &=\tr(j \ell^{J}s \ell^{[n]\setminus\{1\}}i  ). \\ 
\end{split}
\end{equation}

\begin{remark} 
One can easily see that $k_{n,[n]\setminus \{ 1\}} = l_{[n]\setminus\{n\},1}$. 
\end{remark} 

\begin{remark}
Note that $k_{n,J}$ and $l_{J,1}$ are carefully chosen so that they are well-defined polynomials. That is, since $s\in \mathfrak{g}/\mathfrak{u}^+$, a representative of an equivalence class, we need to be careful that polynomials involving $s$ do not depend on the coordinates in $\mathfrak{u}^+$. 
\end{remark}

We have given in \cite[\S6.2]{MR3836769} rational versions of the polynomials in \eqref{eqn:B-inv-polynomials}, but in this paper, we have more $B$-invariant polynomials than the ones in \cite{MR3836769}.

\begin{lemma}
\label{lemma:B-inv-polys}
The polynomials in \eqref{eqn:B-inv-polynomials} are $B$-invariant. 
\end{lemma}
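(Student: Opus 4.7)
The plan is to handle each of the five families in two stages: first verify that $g_J$, $k_{n,J}$, and $l_{J,1}$ are well-defined as functions on $T^*(\mathfrak{b}\times\mathbb{C}^n)$ (the issue is that $s$ is determined only modulo $\mathfrak{u}^+$), and then verify $B$-invariance by a direct substitution, using the equivariance of $\ell^J$ and $L^J$ recorded in Lemma~\ref{lem:ellJ-B-invariant} and Corollary~\ref{cor:ellJ-B-invariant} together with cyclicity of the trace. Since $f_J$ and $h_\iota$ do not involve $s$, only the second step is needed for them.

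For $g_J$, since $r\in\mathfrak{b}$ is upper triangular, so is $\ell^J(r)$; for any $u\in\mathfrak{u}^+$ the product $\ell^J(r)\cdot u$ is strictly upper triangular, so $\tr(\ell^J u)=0$ and $\tr(\ell^J s)$ depends only on the class $[s]\in\mathfrak{g}/\mathfrak{u}^+$. For $k_{n,J}=\tr(j\ell^{[n]\setminus\{n\}}s\ell^J i)$, expanding as $\sum_{a,b}(j\ell^{[n]\setminus\{n\}})_a\, s_{ab}(\ell^J i)_b$ reduces well-definedness to the assertion $(j\ell^{[n]\setminus\{n\}})_a(\ell^J i)_b=0$ whenever $a<b$. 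The key observation is that $\ell^{[n]\setminus\{n\}}(r)$ annihilates $V_{n-1}:=\Span(e_1,\ldots,e_{n-1})$: the restriction $r|_{V_{n-1}}$ is upper triangular with diagonal $r_{11},\ldots,r_{n-1,n-1}$, so by Cayley--Hamilton the polynomial $\prod_{k=1}^{n-1}(r-r_{kk}\I)$ vanishes on $V_{n-1}$. Hence columns $1,\ldots,n-1$ of $\ell^{[n]\setminus\{n\}}(r)$ are zero, so $(j\ell^{[n]\setminus\{n\}})_a=0$ for every $a<n$, which covers all pairs $a<b$. For $l_{J,1}=\tr(j\ell^J s\ell^{[n]\setminus\{1\}}i)$ the dual argument applies: the induced action of $r$ on $\mathbb{C}^n/V_1$ has characteristic polynomial $\prod_{k=2}^{n}(x-r_{kk})$, so $\ell^{[n]\setminus\{1\}}(r)\mathbb{C}^n\subseteq V_1=\mathbb{C}e_1$, which forces $(\ell^{[n]\setminus\{1\}}i)_b=0$ for every $b>1$.

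The $B$-invariance is then a uniform computation. For $b\in B$, Lemma~\ref{lem:ellJ-B-invariant} gives $\ell^J(brb^{-1})=b\ell^J(r)b^{-1}$, and the conjugations on either side cancel against the transformations of $i,j,s$. For instance,
\[
f_J(b.(r,s,i,j))=\tr\bigl(jb^{-1}\cdot b\ell^J(r)b^{-1}\cdot bi\bigr)=\tr\bigl(j\ell^J(r)i\bigr),
\]
and analogously
\[
k_{n,J}(b.(r,s,i,j))=\tr\bigl(jb^{-1}\cdot b\ell^{[n]\setminus\{n\}}(r)b^{-1}\cdot bsb^{-1}\cdot b\ell^J(r)b^{-1}\cdot bi\bigr)=\tr\bigl(j\ell^{[n]\setminus\{n\}}s\ell^J i\bigr).
\]
The polynomial $g_J$ is handled by the same cancellation after using cyclicity of the trace; $l_{J,1}$ mirrors $k_{n,J}$; and $h_\iota$ is treated identically after invoking Corollary~\ref{cor:ellJ-B-invariant} in place of the lemma.

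The main obstacle is not the $B$-invariance step, which is essentially mechanical once $B$-equivariance of $\ell^J$ and $L^J$ is recorded, but rather the well-definedness of $k_{n,J}$ and $l_{J,1}$ as functions of $s\in\mathfrak{g}/\mathfrak{u}^+$. This is precisely what forces the asymmetric choice of $[n]\setminus\{n\}$ to the left of $s$ and $[n]\setminus\{1\}$ to the right of $s$, and it makes concrete the remark preceding the lemma that these polynomials are \emph{carefully chosen}: any other placement would leave an uncancelled $\mathfrak{u}^+$-direction and spoil well-definedness.
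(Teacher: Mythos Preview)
Your proof is correct, and for the $B$-invariance itself it follows exactly the paper's approach: direct substitution using Lemma~\ref{lem:ellJ-B-invariant} and Corollary~\ref{cor:ellJ-B-invariant} to push the conjugation through $\ell^J$ and $L^J$, then cancelling against the transformed $i$, $j$, $s$. Where you go beyond the paper is in also verifying well-definedness of $g_J$, $k_{n,J}$, $l_{J,1}$ as functions of $s\in\mathfrak{g}/\mathfrak{u}^+$; the paper only asserts this in the remark preceding the lemma and postpones the column/row computation for $\ell^{[n]\setminus\{n\}}$ and $\ell^{[n]\setminus\{1\}}$ to the proof of Lemma~\ref{lemma:det-semi-inv}. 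Your Cayley--Hamilton argument on the flag $V_{n-1}$ and on $\mathbb{C}^n/V_1$ is a cleaner justification of that structure than the paper's bare assertion of the vanishing pattern.
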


\begin{proof}
First, $f_J$ is a $B$-invariant polynomial since for $b\in B$,  
\begin{align*}
f_J(b.(r,s,i,j))  
 &= f_J(brb^{-1}, bsb^{-1}, bi, jb^{-1})  \\ 
 &= \tr(jb^{-1}\ell^J(\Ad_b (r)) bi )  \\ 
 &= \tr(jb^{-1}(b\ell^J b^{-1})bi ) \mbox{ by Lemma~\ref{lem:ellJ-B-invariant}}\\ 
 &= \tr(j\ell^Ji)
 = f_J(r,s,i,j).  
\end{align*} 
Similarly, $g_J$ is $B$-invariant since for $b\in B$, 
\begin{align*} 
g_J(b.(r,s,i,j)) 
 &= \tr(\ell^J(\Ad_b(r))bsb^{-1}) \\  
 &= \tr(\Ad_b(\ell^J(r))bsb^{-1}) \\ 
 &= \tr((b\ell^J(r)b^{-1})bsb^{-1}) \mbox{ by Lemma~\ref{lem:ellJ-B-invariant}}\\   
 &= \tr(\ell^J(r) s) = g_J(r,s,i,j). 
\end{align*} 
Next, for $b\in B$, we have 
\begin{align*}
h_{\iota}(b.(r,s,i,j)) 
&= \tr(L^{[n]\setminus \{\iota\}}(\Ad_b (r)) brb^{-1})  \\ 
&= \tr(\Ad_b(L^{[n]\setminus\{\iota\}}(r))brb^{-1})  \mbox{ by Corollary~\ref{cor:ellJ-B-invariant}}  \\ 
&=  \tr((b L^{[n]\setminus\{\iota\}}(r)b^{-1})brb^{-1}) \\ 
&= \tr(L^{[n]\setminus \{\iota\}}(r)r)  
  = h_{\iota}(r,s,i,j). 
\end{align*}
Now, for $b\in B$, we have 
\begin{align*} 
k_{n,J}(b.(r,s,i,j)) 
&= \tr(jb^{-1} \ell^{[n]\setminus \{n\}}(\Ad_b(r))b sb^{-1} \ell^{J}(\Ad_b(r)) bi) \\ 
&= \tr(jb^{-1}\Ad_b (\ell^{[n]\setminus \{n\}}(r)) bsb^{-1} \Ad_b(\ell^{J}(r)) bi)  
\mbox{ by Lemma~\ref{lem:ellJ-B-invariant}} \\ 
&= \tr(jb^{-1}(b\ell^{[n]\setminus \{n\}}(r)b^{-1}) bsb^{-1} (b\ell^{J}(r)b^{-1}) bi) \\ 
&= \tr(j \ell^{[n]\setminus \{n\}}(r) s \ell^{J}(r) i)   
  = k_{n,J}(r,s,i,j).   
\end{align*} 
Since the proof for  $l_{J,1}$ is similar to the one for $k_{n,J}$, we omit the proof. 
\end{proof}

\begin{proposition}
\label{prop:B-inv-subalgebra}
The $B$-invariant ring $\mathbb{C}[T^*(\mathfrak{b}\times \mathbb{C}^n)]^B$ contains the subalgebra generated by the polynomials in \eqref{eqn:B-inv-polynomials}. 
\end{proposition}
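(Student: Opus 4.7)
The plan is to combine Lemma~\ref{lemma:B-inv-polys}, which establishes $B$-invariance of each generator listed in \eqref{eqn:B-inv-polynomials}, with the observation that $\mathbb{C}[T^*(\mathfrak{b}\times\mathbb{C}^n)]^B$ is a $\mathbb{C}$-subalgebra of $\mathbb{C}[T^*(\mathfrak{b}\times\mathbb{C}^n)]$ and hence closed under sums, products, and scalar multiplication. Once every generator is verified to be a well-defined element of the invariant ring, every polynomial expression in them lies in the invariant ring as well, yielding the containment. The content beyond invoking Lemma~\ref{lemma:B-inv-polys} is therefore purely the well-definedness check: because $s$ is merely a representative of a class in $\mathfrak{b}^*\cong\mathfrak{g}/\mathfrak{u}^+$, the polynomials $g_J$, $k_{n,J}$, $l_{J,1}$, which explicitly involve $s$, must not depend on the lift to $\mathfrak{g}$.

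I would express each such polynomial as $\tr(A(r,i,j)\,s)$ by cyclicity of the trace, reducing well-definedness to upper-triangularity of the ``test matrix'' $A(r,i,j)$: for any $u\in\mathfrak{u}^+$, an upper triangular $A$ makes $Au$ strictly upper triangular, so $\tr(Au)=0$. For $g_J$ the test matrix is simply $\ell^J(r)$, a polynomial in the upper triangular $r$ and hence already upper triangular. For $k_{n,J}$ and $l_{J,1}$, cyclicity yields the test matrices $\ell^J(r)\,ij\,\ell^{[n]\setminus\{n\}}(r)$ and $\ell^{[n]\setminus\{1\}}(r)\,ij\,\ell^J(r)$, respectively, and upper triangularity of these is less immediate.

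The key structural claim, and the only step with any real content, is that $\ell^{[n]\setminus\{n\}}(r)$ has all columns zero except possibly the last, while $\ell^{[n]\setminus\{1\}}(r)$ has all rows zero except possibly the first. Both follow by induction on the number of factors, exploiting that $r$ is upper triangular. For the first, multiplying $\ell^{\{1,\ldots,m\}}(r)$ on the right by $(r-r_{m+1,m+1}\I)$ preserves vanishing of columns $1,\ldots,m$ and kills the new column $m+1$ as well: column $m+1$ of $(r-r_{m+1,m+1}\I)$ is supported in rows $1,\ldots,m$ (its $(m+1,m+1)$-entry is zero), and those columns of $\ell^{\{1,\ldots,m\}}(r)$ vanish by hypothesis. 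The second claim is symmetric, proved by multiplying on the left by $(r-r_{kk}\I)$ for $k$ decreasing from $n$. Granted these supports, the rank-one factors $\ell^J(r)\,i$ and $j\,\ell^{[n]\setminus\{n\}}(r)$ (and their counterparts) force both test matrices to be supported only in the last column or the first row, hence trivially upper triangular. Combined with Lemma~\ref{lemma:B-inv-polys}, this completes the proof; the main (and modest) obstacle is the inductive bookkeeping with products of $(r-r_{kk}\I)$.
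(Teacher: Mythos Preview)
Your proof is correct and follows the paper's approach exactly: invoke Lemma~\ref{lemma:B-inv-polys} and use that the invariant ring, being a subalgebra, is closed under polynomial combinations. The additional well-definedness check you supply (independence from the lift of $s$ via the support structure of $\ell^{[n]\setminus\{n\}}$ and $\ell^{[n]\setminus\{1\}}$) is not part of the paper's proof of this proposition---the paper flags the issue in the remark after \eqref{eqn:B-inv-polynomials} and establishes the relevant column/row support later, in the proof of Lemma~\ref{lemma:det-semi-inv}---so your version is simply more self-contained.
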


\begin{proof}
The theorem holds by Lemma~\ref{lemma:B-inv-polys}. 
Thus, we have an inclusion 
\[ 
\mathbb{C}[f_J, g_J, h_{\iota}, k_{n,J}, l_{J,1}] \hookrightarrow \mathbb{C}[T^*(\mathfrak{b}\times \mathbb{C}^n)]^B 
\]  
of rings.    
%
\end{proof}

\begin{conjecture}
Let $\iota\in [n]$ and $J\subseteq [n]$. 
The polynomials $f_J, g_J, h_{\iota}, k_{n,J}, l_{J,1}$ in \eqref{eqn:B-inv-polynomials} 
generate the $B$-invariant subalgebra 
$\mathbb{C}[T^*(\mathfrak{b}\times \mathbb{C}^n)]^B$. 
\end{conjecture}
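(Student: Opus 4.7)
The plan is to combine a reduction to reductive invariant theory with a degree-bounded computational verification in the low $n$ regime where the paper operates. First, one exploits the classical identification $\mathbb{C}[X]^B \cong \mathbb{C}[G \times^B X]^G$, valid for any $B$-variety $X$ embedding as a $B$-stable subset of a $G$-module. This moves the problem from $B$-invariants on $T^*(\mathfrak{b} \times \mathbb{C}^n)$ to $G$-invariants on the associated bundle $G \times^B T^*(\mathfrak{b} \times \mathbb{C}^n)$, which fibers over the Grothendieck--Springer resolution and embeds into $T^*(\mathfrak{g} \times \mathbb{C}^n)$. The First Fundamental Theorem for $GL_n$ of Procesi--Razmyslov then gives a generating family consisting of traces of words $\tr(r^{a_1} s^{b_1} r^{a_2} s^{b_2} \cdots)$ together with the bilinear forms $j \cdot M \cdot i$ where $M$ ranges over monomials in $r$ and $s$.

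Next, one must reconcile this $G$-theoretic generating set with the non-reductive enlargement of invariants in the $B$-picture. The crucial new phenomenon is that the diagonal entries $r_{11}, \ldots, r_{nn}$ each become individual $B$-invariants by Lemma~\ref{lemma:B-inv}, so pure polynomials in $r$ give rise to the rational generators $h_{\iota}$; the Cayley--Hamilton identity, whose coefficients are elementary symmetric polynomials in the $r_{kk}$, then bounds the powers of $r$ that need to appear. The factor $\ell^J = \prod_{k \in J}(r - r_{kk}\I)$ is precisely the ingredient which records that the $B$-invariants remember a chosen ordered tuple of eigenvalues rather than merely the symmetric functions; accordingly the task reduces to showing that every trace or bilinear word produced by the FFT, after multiplying by a suitable $\ell^J$, can be rewritten in terms of $f_J, g_J, h_{\iota}, k_{n,J}, l_{J,1}$.

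A third step is to handle the subtlety that $s \in \mathfrak{b}^* \cong \mathfrak{g}/\mathfrak{u}^+$ is only defined modulo the nilpotent radical. The placement of $\ell^{[n] \setminus \{n\}}$ on the left of $s$ in $k_{n,J}$ (respectively $\ell^{[n] \setminus \{1\}}$ on the right of $s$ in $l_{J,1}$) is exactly what forces the would-be lift into $\mathfrak{u}^+$ to annihilate against $j$ (respectively $i$). One verifies inductively that any invariant linear in $s$ can be written in terms of $g_J$, $k_{n,J}$, and $l_{J,1}$, by moving $s$ to the interior of a trace word and then using $\ell^J$ relations to reduce neighbouring powers of $r$ to $r_{kk}$-scalars. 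For invariants of higher bidegree in $s$, one expects a similar reduction via repeated use of Cayley--Hamilton on the $r$-factors that separate consecutive $s$'s.

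The main obstacle is the higher $s$-degree case: invariants quadratic or greater in $s$ a priori require generators of the form $\tr(s \ell^J s \ell^K)$ and multilinear forms $j \cdot \cdots s \cdots s \cdots i$, and it is not immediately clear that these all lie in the subalgebra generated by the polynomials in~\eqref{eqn:B-inv-polynomials}. The plan is therefore to first confirm the conjecture computationally for $n \le 5$ by comparing the Hilbert series of $\mathbb{C}[f_J, g_J, h_{\iota}, k_{n,J}, l_{J,1}]$ with that of $\mathbb{C}[T^*(\mathfrak{b} \times \mathbb{C}^n)]^B$ (the latter computed via a Luna slice argument on the regular semisimple locus together with the description in \cite[Thm.~1.6]{MR3836769}), and then to extract from the matching the explicit syzygies that rewrite the higher $s$-degree trace monomials in terms of the proposed generators. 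If the Hilbert series disagree in some bidegree, the missing invariants would then be added to the generating list and the conjecture refined accordingly.
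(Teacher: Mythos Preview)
The statement you are attempting to prove is labeled a \emph{Conjecture} in the paper, and the paper does not provide a proof. Immediately following the conjecture the authors write that the reverse inclusion $\mathbb{C}[T^*(\mathfrak{b}\times \mathbb{C}^n)]^B \subseteq \mathbb{C}[f_J, g_J, h_{\iota}, k_{n,J}, l_{J,1}]$ ``is much more difficult to prove for the nonreductive group setting'' and explicitly leave it as an open problem. So there is no proof in the paper to compare your attempt against.

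Your proposal is not a proof either; it is a research plan with acknowledged gaps. You yourself identify the ``main obstacle'' as the higher $s$-degree invariants, and your fallback is a Hilbert-series comparison that may well fail and force you to enlarge the generating set. That is an honest description of a strategy, but it does not establish the conjecture. A few more specific difficulties: the identification $\mathbb{C}[X]^B \cong \mathbb{C}[G\times^B X]^G$ is correct, but $G\times^B T^*(\mathfrak{b}\times\mathbb{C}^n)$ is not affine, and its ring of global $G$-invariant functions is not obviously governed by the Procesi--Razmyslov FFT in the form you invoke; the FFT applies to representations built from copies of $\mathbb{C}^n$ and $(\mathbb{C}^n)^*$, whereas here the fibre involves $\mathfrak{b}^*\cong\mathfrak{g}/\mathfrak{u}^+$, which is not such a representation of $G$. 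Even granting a trace-word generating set, your reduction of words of $s$-degree $\geq 2$ to the listed generators is only asserted, not argued, and the paper's own list contains no invariant with two or more factors of $s$; if such invariants are genuinely needed (as your plan anticipates), the conjecture as stated would be false rather than proved.
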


\begin{remark}
Although $\mu_B$ is a complete intersection for  $n\leq 5$, 
the reverse inclusion 
\[ 
\mathbb{C}[T^*(\mathfrak{b}\times \mathbb{C}^n)]^B \subseteq \mathbb{C}[f_J, g_J, h_{\iota}, k_{n,J}, l_{J,1}] 
\] 
is much more difficult to prove for the nonreductive group setting. We thus leave this as an open problem. 
\end{remark}


\begin{remark}
One of the reasons why $B$-invariant polynomials are important is  because they provide an alternative way to study the affine quotient $\mu^{-1}(0)/\!\!/B$ via its ring of functions, i.e.,  since 
$\mathbb{C}[\mu^{-1}(0)/\!\!/B] \cong 
\mathbb{C}[T^*(\mathfrak{b}\times \mathbb{C}^n)]/\langle ([r,s]+ij)_{\gamma\nu} \rangle$,  
\begin{align*}
\mathbb{C}[\mu^{-1}(0)/\!\!/B] &= \mathbb{C}[\mu^{-1}(0)]^B  
= \frac{\mathbb{C}[T^*(\mathfrak{b}\times \mathbb{C}^n)]^B}{\langle ([r,s]+ij)_{\gamma\nu} \rangle^B}   
\supseteq \frac{\mathbb{C}[f_{J}, g_{J}, h_{\iota}, k_{n,J},l_{J,1}]}{
\langle \text{syzygies}\rangle + \langle ([r,s]+ij)_{\gamma\nu}\rangle^B}, 
\end{align*}
where the syzygies can explicitly be computed using \texttt{Macaulay2} (cf.~\cite{GS}). The syzygies, i.e., relations among generators, for our setting are inhomogeneous and appear to be difficult to generalize for higher $n$.
\end{remark}

   
%

\section{GIT quotients of the Borel moment map} 
\label{section:GIT-B}
Let $\chi:B\rightarrow \mathbb{C}^*$ be a character, i.e., a group homomorphism. In this section, we will consider when $\chi=\det$ and $\det^{-1}$. We refer to \cite[\S8]{GG06} for the construction of the semi-invariants for the classical setting $GL_n(\mathbb{C})$.

Consider the closed imbedding 
\begin{equation}
\label{eqn:diagonal-imbedding}
\begin{split}
\varepsilon_{i_0}:&\mathfrak{h}\times \mathfrak{h}\hookrightarrow \mathfrak{b}\times \mathfrak{b}^*\times \mathbb{C}^n \times (\mathbb{C}^n)^*  \\ 
&(a_1,\ldots, a_n, b_1,\ldots, b_n) \mapsto (\diag(a_1,\ldots, a_n), \diag(b_1,\ldots, b_n), i_0,0) 
\end{split}
\end{equation}
and 
\begin{equation}
\label{eqn:diagonal-imbedding-jvec}
\begin{split}
\varepsilon_{j_0}:&\mathfrak{h}\times \mathfrak{h}\hookrightarrow \mathfrak{b}\times \mathfrak{b}^*\times \mathbb{C}^n \times (\mathbb{C}^n)^*  \\ 
&(a_1,\ldots, a_n, b_1,\ldots, b_n) \mapsto (\diag(a_1,\ldots, a_n), \diag(b_1,\ldots, b_n), 0,j_0), 
\end{split}
\end{equation}
where $i_0=(1,\ldots,1)\in \mathbb{C}^n$ and $j_0=(1,\ldots, 1)\in (\mathbb{C}^n)^*$.

\subsection{Twisted by $\det$}
\label{subsection:twisted-det}
Let $J\subseteq [n]$, and 
let $v^*\in \wedge^n (\mathbb{C}^n)^*$ be a nonzero volume form. 
Let 
\begin{equation}
\label{eqn:defn-of-noncomm-ring-A}
f=(f_1,\ldots, f_n), 
\quad 
\mbox{ where } f_i\in A:= \mathbb{C}\left\langle 
\ell^J,   
\:\: L^{[n]\setminus \{\iota\}} r, \:\: \ell^{[n]\setminus \{n\}}s \ell^{J}, \:\: \ell^{J}s \ell^{[n]\setminus\{1\}}\right\rangle 
\end{equation}
and $f$ is an $n$-tuple of noncommutative polynomials. 
Consider polynomial functions of the form 
\begin{equation}
\label{eqn:det-semi-invariant}
\psi_f = \left\langle v^*, f_1 i\wedge \ldots \wedge f_n i \right\rangle \in \mathbb{C}[T^*(\mathfrak{b}\times \mathbb{C}^n)].   
\end{equation}

\begin{lemma}
\label{lemma:det-semi-inv}
The polynomials $\psi_f$ are $\det$-semi-invariant. 
\end{lemma}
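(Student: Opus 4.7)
The plan is to check directly from the definition of semi-invariance that $\psi_f(b.(r,s,i,j)) = \det(b)\,\psi_f(r,s,i,j)$ for every $b\in B$. The key observation is that every generator of the noncommutative algebra $A$ in \eqref{eqn:defn-of-noncomm-ring-A} transforms by conjugation by $b$ under the $B$-action, and conjugation intertwines nicely with the wedge product applied to $i\in \mathbb{C}^n$.

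First, I would record how each building block of $A$ transforms. By Lemma~\ref{lem:ellJ-B-invariant} and Corollary~\ref{cor:ellJ-B-invariant}, $\ell^J(\Ad_b r) = b\,\ell^J(r)\,b^{-1}$ and $L^{[n]\setminus\{\iota\}}(\Ad_b r) = b\,L^{[n]\setminus\{\iota\}}(r)\,b^{-1}$. Combining these with $s\mapsto bsb^{-1}$, one obtains
\[
\ell^{[n]\setminus\{n\}}(\Ad_b r)\,(bsb^{-1})\,\ell^{J}(\Ad_b r) = b\bigl(\ell^{[n]\setminus\{n\}}(r)\,s\,\ell^{J}(r)\bigr)b^{-1},
\]
and similarly for $\ell^{J}s\,\ell^{[n]\setminus\{1\}}$. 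Thus every generator $X$ of $A$ satisfies $X(b.(r,s,i,j)) = bX(r,s,i,j)b^{-1}$.

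Since conjugation $X\mapsto bXb^{-1}$ is a ring homomorphism on $M_n(\mathbb{C})$, the same transformation rule extends to arbitrary noncommutative polynomials: $f_k(b.(r,s,i,j)) = b\,f_k(r,s,i,j)\,b^{-1}$ for each $k$. Combining this with $i\mapsto bi$ yields
\[
f_k(b.(r,s,i,j))\cdot(bi) = b\cdot\bigl(f_k(r,s,i,j)\,i\bigr).
\]

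Finally I would apply the wedge: the map $w\mapsto bw$ on $\mathbb{C}^n$ induces $\wedge^n b = \det(b)\cdot \mathrm{id}$ on the one-dimensional space $\wedge^n \mathbb{C}^n$. Therefore
\[
(f_1 i)\wedge\cdots\wedge(f_n i)\Big|_{b.(r,s,i,j)}
= (bf_1 i)\wedge\cdots\wedge(bf_n i) = \det(b)\cdot \bigl(f_1 i\wedge\cdots\wedge f_n i\bigr),
\]
and since $v^*$ is a fixed volume form, pairing with it gives $\psi_f(b.(r,s,i,j)) = \det(b)\,\psi_f(r,s,i,j)$, as desired.

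There is no real obstacle here beyond bookkeeping: the entire argument reduces to the observation that each generator of $A$ is $\Ad_b$-equivariant and that the top wedge of $b$ is $\det(b)$. The only thing to be mindful of is the representative issue flagged for polynomials in $s$, but the specific generators $\ell^{[n]\setminus\{n\}}s\,\ell^J$ and $\ell^{J}s\,\ell^{[n]\setminus\{1\}}$ were chosen precisely to be well-defined on $\mathfrak{g}/\mathfrak{u}^+$, so this causes no trouble.
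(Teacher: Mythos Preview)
Your proof is correct and follows essentially the same route as the paper: both arguments verify that each generator of $A$ is $\Ad_b$-equivariant (via Lemma~\ref{lem:ellJ-B-invariant} and Corollary~\ref{cor:ellJ-B-invariant}), deduce $f_k(b.(r,s,i,j))\,bi = b\,f_k(r,s,i,j)\,i$, and then extract $\det(b)$ from the top wedge. The only difference is that the paper spends the first half of its proof explicitly verifying that the matrix products $\ell^{[n]\setminus\{n\}}s\,\ell^{J}$ and $\ell^{J}s\,\ell^{[n]\setminus\{1\}}$ are independent of the $\mathfrak{u}^+$-part of $s$ (by computing which rows/columns of $\ell^{[n]\setminus\{n\}}$ and $\ell^{[n]\setminus\{1\}}$ vanish), whereas you assert this in a closing remark without checking it; you may want to supply that short computation to make the argument self-contained.
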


\begin{proof}
Consider $\psi_f$ in \eqref{eqn:det-semi-invariant}. 
First, it is clear that $\ell^J$ and $L^{[n]\setminus \{\iota\}}r$ are well-defined since they are elements in $\mathfrak{b}$. 
Next, we will check that the product of matrices 
\[ 
\ell^{[n]\setminus \{n\}}s \ell^{J} 
\quad 
\mbox{ and }
\quad
\ell^{J}s \ell^{[n]\setminus\{1\}}
\] 
are well-defined. 
First, consider $\ell^{[n]\setminus \{n\}}s \ell^{J}$.
Since 
\[ 
\ell^{[n]\setminus \{n\}}_{\gamma\mu} = 
\begin{cases} 
* &\mbox{ if }\mu=n, \\ 
0 &\mbox{ otherwise},  \\ 
\end{cases} 
\] 
where $*$ represents a nonzero coordinate entry, 
$\ell^{[n]\setminus \{n\}}$ in $\ell^{[n]\setminus \{n\}}s$ will kill all the coordinates of $s$ in $\mathfrak{u}^+$, i.e., $\ell^{[n]\setminus \{n\}}s$ does not depend on $\mathfrak{u}^+$. Thus,  
$\ell^{[n]\setminus \{n\}}s \ell^{J}$ is well-defined. 

Similarly, for $\ell^{J}s \ell^{[n]\setminus\{1\}}$, 
\[ 
\ell^{[n]\setminus\{1\}}_{\gamma\mu}
= \begin{cases}
* &\mbox{ if } \gamma = 1,  \\ 
0 &\mbox{ otherwise}, 
\end{cases} 
\] 
where $*$ represents a nonzero coordinate entry. 
So $s \ell^{[n]\setminus\{1\}}$ only involves the first column of $s$. Thus, $s \ell^{[n]\setminus\{1\}}$ doesn't depend on $\mathfrak{u}^+$, and $\ell^{J}s \ell^{[n]\setminus\{1\}}$ is well-defined. 

Next, since we have 
\begin{align*}
\ell^J(\Ad_b(r)) &= \Ad_b(\ell^J(r)) \mbox{ by Lemma~\ref{lem:ellJ-B-invariant}}, \\ 
L^{[n]\setminus \{\iota\}}(\Ad_b(r)) \Ad_b(r) 
&= \Ad_b(L^{[n]\setminus \{\iota\}}(r)) \Ad_b(r) 
=  \Ad_b(L^{[n]\setminus \{\iota\}}(r) r) \mbox{ by Cor.~\ref{cor:ellJ-B-invariant}}, \\ 
\ell^{[n]\setminus \{n\}}(\Ad_b(r))\Ad_b(s) \ell^{J} (\Ad_b(r))
&= \Ad_b(\ell^{[n]\setminus \{n\}}(r) s \ell^{J}(r)) \mbox{ by Lemma~\ref{lem:ellJ-B-invariant}}, \mbox{ and } \\ 
\ell^{J}(\Ad_b(r)) \Ad_b(s) \ell^{[n]\setminus\{1\}}(\Ad_b(r))  
&= \Ad_b(\ell^{J}(r) s \ell^{[n]\setminus\{1\}}(r)) 
\mbox{ by Lemma~\ref{lem:ellJ-B-invariant}},  
\end{align*}
$\Ad_b(f_{\nu}(r,s))= f_{\nu}(\Ad_b(r),\Ad_b(s))$ for $1\leq \nu\leq n$. 
So 
\[ 
\Ad_b(f_{\nu}i) = \Ad_b(f_{\nu})b i = bf_{\nu}b^{-1}bi = bf_{\nu}i 
\quad 
\mbox{ for }
1\leq \nu\leq n. 
\]  
Thus 
\[ 
b.\psi_f 
= \langle v^*, bf_1i\wedge \ldots \wedge bf_n i \rangle
= \det(b) \langle v^*, f_1i\wedge \ldots \wedge f_n i \rangle
= \det(b)\psi_f, 
\] 
and this completes the proof. 
\end{proof}

\begin{theorem}
\label{thm:det-semi-inv}
For each $k\geq 1$, restriction of functions via the imbedding $\varepsilon_{i_0}$ in \eqref{eqn:diagonal-imbedding} induces a vector space surjection   
$\varepsilon_{i_0}^*:\mathbb{C}[\mu_B^{-1}(0)]^{B,\det^k} \twoheadrightarrow D^k$, where 
\begin{equation}
\label{eqn:alternating-submodule}
D:= \mathbb{C}[\ell^J,  
\: L^{[n]\setminus \{\iota\}} r, \: \ell^{[n]\setminus \{n\}}s \ell^{J}, \: \ell^{J}s \ell^{[n]\setminus\{1\}}]^{\epsilon}, 
\end{equation}
a subspace of $S_n$-alternating polynomials. 
\end{theorem}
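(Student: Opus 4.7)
The plan is to prove this in two main steps: first, that the restriction map $\varepsilon_{i_0}^*$ lands in $D^k$; second, that it surjects onto $D^k$.

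For the containment, fix $\phi\in\mathbb{C}[\mu_B^{-1}(0)]^{B,\det^k}$. At any point $(\diag(a),\diag(b),i_0,0)$ in the image of $\varepsilon_{i_0}$, each matrix-valued generator $\ell^J$, $L^{[n]\setminus\{\iota\}}r$, $\ell^{[n]\setminus\{n\}}s\ell^J$, and $\ell^Js\ell^{[n]\setminus\{1\}}$ becomes diagonal with polynomial entries in $a_1,\ldots,a_n,b_1,\ldots,b_n$. I would first show that $\varepsilon_{i_0}^*(\phi)$ lies in the polynomial ring generated by those diagonal entries: the torus $T\subset B$ acts trivially on diagonal $r$ and $s$ and scales the components of $i_0$, so $\det^k$-semi-invariance under $T$ pins down the polynomial dependence on $i$; invariance under the unipotent radical $U^+\subset B$ (on which $\det\equiv 1$) then forces $\phi$ to be built only from $B$-equivariant matrix functionals in $r,s,i,j$, of which the four listed matrices are precisely the ones surviving the restriction. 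Next, for the $S_n$-alternating structure I invoke the Weyl group: for $w\in S_n$ with permutation-matrix lift $\tilde{w}\in N_G(T)\subset G$, one has $\tilde{w}\cdot(\diag(a),\diag(b),i_0,0)=(\diag(wa),\diag(wb),i_0,0)$, since $i_0$ is $S_n$-fixed. On the open locus of $\mu_B^{-1}(0)$ where $r$ has distinct eigenvalues, the $B$-orbit of a diagonal point together with its $\tilde{w}$-translates fills out a single $G$-orbit, and I would argue via this $G$-orbit structure that any $B$-semi-invariant of character $\det^k$ extends to an $N_G(T)$-semi-invariant with the same character, giving $\phi(\tilde{w}\cdot x)=\sign(w)^k\phi(x)$. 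Polynomiality then extends this alternating relation across the whole image of $\varepsilon_{i_0}$, placing $\varepsilon_{i_0}^*(\phi)$ in $D^k$, with the $k$-fold power reflecting the $k$th power of the character.

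For surjectivity, I would exploit Lemma~\ref{lemma:det-semi-inv}. At a diagonal point, each $f_\nu$ in $\psi_f=\langle v^*, f_1i\wedge\cdots\wedge f_ni\rangle$ (for $f=(f_1,\ldots,f_n)$ with entries in the noncommutative ring $A$ of \eqref{eqn:defn-of-noncomm-ring-A}) evaluates to a diagonal matrix with entries $F_{\nu,\gamma}(a,b)$, so $\varepsilon_{i_0}^*(\psi_f)=\det(F_{\nu,\gamma})_{1\leq\nu,\gamma\leq n}$, which is manifestly $S_n$-alternating in $(a,b)$. Letting $f$ vary over all such $n$-tuples, the resulting determinants span $D$; taking $k$-fold products of these $\psi_f$ yields $\det^k$-semi-invariants whose restrictions span $D^k$, which furnishes the surjection.

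The principal obstacle is the alternating step: the symmetric group does not lie inside $B$, so the transfer of $\det^k$-semi-invariance from $B$ to $S_n$-alternating behavior is not formal. The technical heart will be making precise the extension from $B$-equivariance to $N_G(T)$-equivariance on the regular semisimple locus, and then verifying that this extension is compatible with the moment-map relation $[r,s]+ij=0$ and remains well-behaved under boundary degenerations where $r$ loses the regular-semisimple property.
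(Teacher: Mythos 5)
Your surjectivity half essentially coincides with the paper's own argument: reduce to $k=1$ via $k$-fold products, identify $D$ with $\wedge^n E$ where $E=\mathbb{C}[\ell^J,\, L^{[n]\setminus\{\iota\}}r,\, \ell^{[n]\setminus\{n\}}s\ell^J,\, \ell^{J}s\ell^{[n]\setminus\{1\}}]$, and note that $\varepsilon_{i_0}^*\psi_f=\det(F_{\nu\gamma})$ spans the alternating space as $f$ varies. The one point the paper makes that you gloss over is well-definedness: on the component containing the image of $\varepsilon_{i_0}$ one has $ij=0$, hence $[r,s]=0$, so every commutative polynomial $f\in E$ takes a well-defined matrix value independent of the choice of noncommutative lift in $A$; this is what lets $\psi_f$ be formed for \emph{arbitrary} $f_1,\ldots,f_n\in E$ (not just for preferred lifts) and hence hit all of $D$.

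The genuine gap is in your containment step, and it is exactly the step you defer to as the ``technical heart.'' The proposed mechanism --- extending a $B$-semi-invariant to an $N_G(T)$-semi-invariant on the regular semisimple locus --- is not available in the Borel setting: conjugation by a permutation matrix $\tilde{w}$ does not preserve $\mathfrak{b}$ (it moves $B$ to a different Borel subgroup), so $\tilde{w}$ does not act on $T^*(\mathfrak{b}\times\mathbb{C}^n)$ or on $\mu_B^{-1}(0)$, and a $B$-semi-invariant carries no transformation law under it. Concretely, $(\diag(a),\diag(b),i_0,0)$ and $(\diag(wa),\diag(wb),i_0,0)$ lie in \emph{different} $B$-orbits, and nothing formal forces sign-related values on them: the function $r_{11}$ is $B$-invariant yet restricts to the non-symmetric $a_1$, and multiplying any $\det$-semi-invariant by it already shows that no purely equivariance-theoretic transfer from $B$ to $S_n$ can be the whole story. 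The paper's proof avoids this route entirely: it restricts semi-invariants to the component $\overline{\mathcal{C}}_0$ and argues that the $\det^k$-semi-invariants are accounted for by the explicitly constructed products $\psi_1\cdots\psi_k$ of the semi-invariants from Lemma~\ref{lemma:det-semi-inv}, whose restrictions visibly lie in $D^k$ --- i.e., the alternating behavior is read off from the constructed generators rather than derived from an abstract Weyl-group action. (The paper states that step tersely, but it is the kind of argument you would need; as sketched, your $N_G(T)$-extension step would fail rather than merely be laborious.)
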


\begin{proof}
Let $k\geq 1$. The imbedding $\overline{\mathcal{C}}_0\hookrightarrow \mu_B^{-1}(0)$ induces a bijection $\mathbb{C}[\mu_B^{-1}(0)]^{B,\det^k}\stackrel{\sim}{\rightarrow}\mathbb{C}[\overline{\mathcal{C}}_0]^{B,\det^k}$. It follows that $\mathbb{C}[\mu_B^{-1}(0)]^{B,\det^k}$  contains products $\psi_1\cdots \psi_k$ as a $\mathbb{C}[\mu_B^{-1}(0)]^B$-module.
Since $\varepsilon_{i_0}^*\psi_{f}\in D$ for any $f$, we see that $\varepsilon_{i_0}^*(\psi_1\cdots \psi_k)\in D^k$. 
Hence 
\[ 
\varepsilon_{i_0}^*(\mathbb{C}[\mu_B^{-1}(0)]^{B,\det^k})=\varepsilon_{i_0}^*(\mathbb{C}[\overline{\mathcal{C}}_0]^{B,\det^k})\subseteq D^k. 
\]  


Note that $\mathbb{C}[\mu_B^{-1}(0)]^{B,\det^k}$ contains the $k$-fold products $\psi_1\cdots \psi_k$, as a $\mathbb{C}[\mu_B^{-1}(0)]^{B}$-module.
Thus it suffices to prove surjectivity of the map $\varepsilon_{i_0}^*$ for $k=1$. 
To prove this, we identify $D$ with the $n$-th exterior power  $\wedge^n E$ of the vector space of polynomials, where 
\[ 
E := \mathbb{C}[\ell^J,   
\: L^{[n]\setminus \{\iota\}} r, \: \ell^{[n]\setminus \{n\}}s \ell^{J}, \: \ell^{J}s \ell^{[n]\setminus\{1\}}]. 
\]  
With this identification, the space $D$ is spanned by wedge products $f_1\wedge \ldots \wedge f_n$,  where  
$f_1,\ldots, f_n\in  E$. 

By the definition of the irreducible component $\overline{\mathcal{C}}_0$ for any $(r,s,i,j)\in \overline{\mathcal{C}}_0$, we have $[r,s]=[r,s]+ij=0$. So for any $f\in E$, the expression $f$ is a well-defined matrix. That is, for any lift of $f$ to a noncommutative polynomial 
$\widehat{f}\in A$ (cf.~\eqref{eqn:defn-of-noncomm-ring-A}), i.e., for any $\widehat{f}$ in the preimage of $f$ under the natural projection $A \twoheadrightarrow E$, we have $\widehat{f}=f$. Thus, given an $n$-tuple $f_1,\ldots, f_n$, we have a well-defined element 
\[ 
\psi_f = \langle v^*, f_1i\wedge \ldots \wedge f_n i\rangle \in \mathbb{C}[\mu_B^{-1}(0)]^{B,\det}. 
\] 
\end{proof}

%

\subsection{Twisted by $\det^{-1}$}
\label{subsection:twisted-det-inverse}
Let $v\in \wedge^n \mathbb{C}^n$ be a nonzero volume form. Similar as before, let 
\[
g = (g_1,\ldots, g_n), \qquad 
\mbox{ where }
g_i \in  A,   
\] 
$A$ is defined in \eqref{eqn:defn-of-noncomm-ring-A}, 
and $g$ is an $n$-tuple of noncommutative polynomials. Consider polynomials of the form 
\begin{equation}
\label{eqn:det-inv-semi-inv}
\phi_g = \langle  jg_1\wedge \ldots \wedge jg_n, v \rangle \in \mathbb{C}[T^*(\mathfrak{b}\times \mathbb{C}^n)].
\end{equation}

\begin{lemma}
\label{lemma:det-inv-semi-inv}
The polynomials $\phi_g$ are $\det^{-1}$-semi-invariant. 
\end{lemma}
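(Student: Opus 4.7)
The plan is to mirror the proof of Lemma~\ref{lemma:det-semi-inv} essentially verbatim, with the one essential change being that $j$ is a covector transforming on the right as $j\mapsto jb^{-1}$, so each factor $jg_\nu$ picks up a \emph{right} multiplication by $b^{-1}$ rather than a left multiplication by $b$. The wedge product on $\wedge^n(\mathbb{C}^n)^*$ then contributes the scalar $\det(b)^{-1}$ upon pairing with the fixed volume form $v$.

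First I would import the well-definedness analysis from the proof of Lemma~\ref{lemma:det-semi-inv}: the generators $\ell^J$, $L^{[n]\setminus\{\iota\}}r$, $\ell^{[n]\setminus\{n\}}s\ell^J$, and $\ell^J s\ell^{[n]\setminus\{1\}}$ of the algebra $A$ are well-defined matrix-valued functions on $T^*(\mathfrak{b}\times\mathbb{C}^n)$, and this is a property of the generators themselves, independent of whether one subsequently multiplies against $i$ on the right (as in $\psi_f$) or against $j$ on the left (as in $\phi_g$). Hence each $g_\nu$ gives a well-defined matrix and $\phi_g$ is a well-defined polynomial. Next, invoking the adjoint-equivariance identities already assembled in the proof of Lemma~\ref{lemma:det-semi-inv} gives $\Ad_b(g_\nu(r,s)) = g_\nu(\Ad_b r,\Ad_b s)$ for every $\nu$, so the effect on each factor $jg_\nu$ under the $B$-action is
\[
b\cdot (jg_\nu) \;=\; (jb^{-1})(bg_\nu b^{-1}) \;=\; (jg_\nu)\,b^{-1}.
\]
Since $\wedge^n(\mathbb{C}^n)^*$ is the character $\det^{-1}$ of $B$, pairing the transformed wedge $(jg_1)b^{-1}\wedge\ldots\wedge(jg_n)b^{-1}$ against the fixed $v\in\wedge^n\mathbb{C}^n$ multiplies by $\det(b)^{-1}$, yielding $b\cdot\phi_g = \det(b)^{-1}\phi_g$.

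The main (mild) obstacle is simply the convention bookkeeping: one must confirm that the duality pairing between $\wedge^n(\mathbb{C}^n)^*$ and $\wedge^n\mathbb{C}^n$ extracts exactly the scalar $\det(b^{-1})$, rather than $\det(b)$, when the covectors transform as $jg_\nu\mapsto (jg_\nu)b^{-1}$. This is consistent with, and dual to, the $\det$-case in Lemma~\ref{lemma:det-semi-inv}, where instead $bf_1 i\wedge\ldots\wedge bf_n i = \det(b)(f_1 i\wedge\ldots\wedge f_n i)$. Once the convention is fixed, the $\det^{-1}$-semi-invariance follows by a one-line computation.
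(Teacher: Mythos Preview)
Your proposal is correct and follows exactly the approach the paper intends: the paper's own proof simply states that it is analogous to Lemma~\ref{lemma:det-semi-inv} and omits the details, and you have filled those details in faithfully. The only substantive change---that $j\mapsto jb^{-1}$ produces a right multiplication by $b^{-1}$ on each $jg_\nu$, so the top exterior power contributes $\det(b)^{-1}$---is precisely the point, and your bookkeeping is sound.
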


\begin{proof}
The proof of Lemma~\ref{lemma:det-inv-semi-inv}
is analogous to the proof of Lemma~\ref{lemma:det-semi-inv}, so we omit the details.  
\end{proof}

Similar to Theorem~\ref{thm:det-semi-inv}, we have the following: 
\begin{theorem}
\label{thm:det-inv-semi-inv}
For each $k\leq 1$, restriction of functions via the imbedding $\varepsilon_{j_0}$ in \eqref{eqn:diagonal-imbedding-jvec} induces a vector space surjection $\varepsilon_{j_0}^*:\mathbb{C}[\mu_B^{-1}(0)]^{B,\det^k} \twoheadrightarrow  
D^{-k}$, where $D$ is the $S_n$-alternating submodule given in \eqref{eqn:alternating-submodule}. 
\end{theorem}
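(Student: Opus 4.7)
The plan is to mirror the proof of Theorem~\ref{thm:det-semi-inv} throughout, swapping the roles of the vector $i$ and the volume form $v^*$ for the covector $j$ and the volume form $v$, and replacing the imbedding $\varepsilon_{i_0}$ by $\varepsilon_{j_0}$. First I would use the closed imbedding of $\overline{\mathcal{C}}_0 \hookrightarrow \mu_B^{-1}(0)$ — the component on which $i=0$ and $j=(1,\ldots,1)$ — to obtain an isomorphism $\mathbb{C}[\mu_B^{-1}(0)]^{B,\det^k} \stackrel{\sim}{\to} \mathbb{C}[\overline{\mathcal{C}}_0]^{B,\det^k}$, reducing the question of surjectivity to a single component. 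By Lemma~\ref{lemma:det-inv-semi-inv} the polynomials $\phi_g$ lie in $\mathbb{C}[\mu_B^{-1}(0)]^{B,\det^{-1}}$, and hence $k$-fold products $\phi_1\cdots \phi_k$ lie in $\mathbb{C}[\mu_B^{-1}(0)]^{B,\det^{-k}}$ as a $\mathbb{C}[\mu_B^{-1}(0)]^B$-module. Restricting via $\varepsilon_{j_0}^*$ yields elements of $D^k$ (written $D^{-k}$ in the paper's convention), so it suffices to prove surjectivity for $k=1$.

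Second, I would verify that the noncommutative alphabet $A$ of \eqref{eqn:defn-of-noncomm-ring-A} descends to a well-defined matrix polynomial algebra on $\overline{\mathcal{C}}_0$. The key fact, exactly as in the proof of Theorem~\ref{thm:det-semi-inv}, is that on this component we have $[r,s]=-ij=0$ because $i=0$. Consequently any two noncommutative lifts $\widehat{g}\in A$ of the same element $g$ in
\[
E = \mathbb{C}[\ell^J,\: L^{[n]\setminus\{\iota\}}r,\: \ell^{[n]\setminus\{n\}}s\ell^{J},\: \ell^{J}s\ell^{[n]\setminus\{1\}}]
\]
produce the same matrix once evaluated on $\overline{\mathcal{C}}_0$, so that $\phi_g$ is unambiguous there. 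The well-definedness of the individual matrix products $\ell^{[n]\setminus\{n\}}s\ell^{J}$ and $\ell^{J}s\ell^{[n]\setminus\{1\}}$ modulo $\mathfrak{u}^+$ is already established in the proof of Lemma~\ref{lemma:det-semi-inv} and carries over verbatim.

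Third, to conclude surjectivity of $\varepsilon_{j_0}^*$ onto $D$ in the case $k=1$, I would identify $D$ with the exterior power $\wedge^n E$. Under this identification the space $D$ is spanned by wedge products $g_1\wedge\cdots\wedge g_n$ with $g_\nu\in E$, and for any noncommutative lift $\widehat{g}=(\widehat{g}_1,\ldots,\widehat{g}_n)\in A^n$ the function
\[
\phi_{\widehat{g}} = \langle j\widehat{g}_1\wedge\cdots\wedge j\widehat{g}_n,\: v\rangle \in \mathbb{C}[\mu_B^{-1}(0)]^{B,\det^{-1}}
\]
restricts under $\varepsilon_{j_0}^*$ to the alternating polynomial $g_1\wedge\cdots\wedge g_n\in D$, because pairing with $j_0=(1,\ldots,1)\in(\mathbb{C}^n)^*$ on the diagonal locus picks out exactly the determinantal combination encoding the wedge product.

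The main obstacle I anticipate is the last step: verifying that the pairing $\langle jg_1\wedge\cdots\wedge jg_n,\: v\rangle$ on $\overline{\mathcal{C}}_0$ recovers an arbitrary element of $\wedge^n E$ rather than a proper subspace. The $\det^{-1}$ case is subtler than the $\det$ case because $j$ enters as a row covector multiplying the $g_\nu$ from the left, so one must track carefully how the $S_n$-alternating structure on $D$ interacts with the diagonal embedding; establishing that every generator of $\wedge^n E$ is hit will amount to a Vandermonde-type nondegeneracy argument on the diagonal locus, analogous to the one implicit in the proof of Theorem~\ref{thm:det-semi-inv}.
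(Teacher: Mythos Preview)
Your proposal is correct and takes essentially the same approach as the paper, which does not give an explicit proof of this theorem but simply prefaces it with ``Similar to Theorem~\ref{thm:det-semi-inv}, we have the following.'' Your mirror argument---replacing $i$, $v^*$, $\varepsilon_{i_0}$, and $\psi_f$ by $j$, $v$, $\varepsilon_{j_0}$, and $\phi_g$, and working on the component $\overline{\mathcal{C}}_0$ where $i=0$ so that $[r,s]=0$---is exactly what the analogy calls for, and your level of detail in fact exceeds what the paper provides for either theorem.
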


%

\section{Restriction to the regular semi-simple and the singular locus}
\label{section:restriction-to-rss-singular-locus} 

When restricting both the GIT and the affine quotient to the regular semi-simple locus $\mu_B^{-1}(0)^{\rss}$, we expect an isomorphism 
\[ 
\xymatrix@-1pc{
\mu_B^{-1}(0)^{\rss}/\!\!/_{\det}B \cong 
\mu_B^{-1}(0)^{\rss}/\!\!/_{\det^{-1}}B  \ar[rr]^{\:\:\:\qquad\qquad\simeq} & &  \mu_B^{-1}(0)^{\rss}/\!\!/B
}
\] 
of varieties. 
In fact, the regular semi-simple locus should precisely be the nonsingular locus in both the GIT and affine quotient.  By \cite[Thm~1.6]{MR3836769}, the singular locus $\mu_B^{-1}(0)^{\sing}/\!\!/B$ in the $B$-affine quotient is contained in the locus isomorphic to 
\[ 
H=\{(r_1,\ldots, r_n,0,\ldots, 0):r_i = r_j \mbox{ for some }i\not=j\}, 
\]  
and 
\[ 
\xymatrix@-1pc{
\mu_B^{-1}(0)^{\sing}/\!\!/_{\det}B \stackrel{\dagger}{\cong}
\mu_B^{-1}(0)^{\sing}/\!\!/_{\det^{-1}}B \ar@{->>}[rr] & & 
\mu_B^{-1}(0)^{\sing}/\!\!/ B 
}
\] 
is expected to be a resolution of singularities, 
where $\dagger$ is meant that a certain notion of wall-crossing in GIT has occurred, and a notion of stability has been changed from $\det$ to $\det^{-1}$. 
In fact, we expect $\mu_B^{-1}(0)^{\sing}/\!\!/B \cong H$. 
Note that singularities are still not isolated in the affine quotient.

\section{Connections to the Hilbert scheme of $n$ points on a complex plane} 
\label{section:connections-to-Hilbert-scheme} 
Let $\mu_G:T^*(\mathfrak{gl}_n\times \mathbb{C}^n)\rightarrow \mathfrak{gl}_n^*\stackrel{\tr}{\cong}\mathfrak{gl}_n$, where $(r,s,i,j)\mapsto [r,s]+ij$.  
By \cite[Ch.~3]{MR1711344}, we have the well-known morphism 
\[
\xymatrix@-1pc{ 
\mu_G^{-1}(0)/\!\!/_{\det} GL_n(\mathbb{C}) \cong (\mathbb{C}^2)^{[n]}\ar@{->>}[dd]^{\text{Hilbert--Chow}} \\ 
\\  
\mu_G^{-1}(0)/\!\!/_{\det} GL_n(\mathbb{C}) \cong S^n\mathbb{C}^2 = \mathbb{C}^2\times \ldots \times \mathbb{C}^2/S_n, \\ 
}
\] 
where $(\mathbb{C}^2)^{[n]}$ is the Hilbert scheme of $n$ points on a complex plane. 

Analogously for our setting, we have: 
\begin{conjecture}
\label{conjecture:relating-to-Hilbert-schemes}
The following hold: 
\begin{enumerate}
\item\label{item:variational-GIT} $\mu_B^{-1}(0)/\!\!/_{\det}B \cong \mu_B^{-1}(0)/\!\!/_{\det^{-1}}B$, 
\item there is a resolution $\mu_B^{-1}(0)/\!\!/_{\det}B\twoheadrightarrow\mu_B^{-1}(0)/\!\!/B$ of singularities, 
\item $\mu_B^{-1}(0)/\!\!/_{\det}B$ is isomorphic to the flag Hilbert scheme on a complex plane (line). 
\end{enumerate}
\end{conjecture}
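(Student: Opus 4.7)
The conjecture has three parts, and my plan is to handle them in the order (c), (b), (a), since (b) will follow by a Hilbert--Chow-style construction once (c) is in place, and (a) will follow from a symmetry of the moduli interpretation of (c). Throughout, I assume $n \le 5$ so that Proposition~\ref{prop:Nevins-Prop-4.2} and the complete intersection property are available, and I work with the distinguished component $\overline{\mathcal{C}}_0$, then lift statements to all of $\mu_B^{-1}(0)$ using the $B$-equivariance of the enumeration by $\ell$.

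For (c), I would construct an explicit $B$-invariant morphism from the $\det$-semistable locus to $\FHilb^n(\mathbb{C}^2)$, and then check it descends to an isomorphism of GIT quotients. Concretely, given a $\det$-stable quadruple $(r,\overline{s},i,j) \in \mu_B^{-1}(0)$, I first invoke Nevins's Lemma~4.1 to produce a canonical lift $s \in \mathfrak{g}$ of $\overline{s}$ satisfying $[r,s] + ij = 0$ exactly, so that $r$ and $s$ become a genuine pair of matrices with commutator of rank at most one. Then $(r,s,i)$ is Nakajima data for a point of $(\mathbb{C}^2)^{[n]}$, and the Borel structure of $r$ (upper-triangular) together with the cyclic vector $i$ produces a canonical complete flag of $(r,s)$-invariant subquotients of $\mathbb{C}^n$, hence a flag of ideals $I_0 \supseteq I_1 \supseteq \cdots \supseteq I_n$ with $\dim(\mathbb{C}[x,y]/I_k) = k$. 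For the inverse, I would start with a flag of ideals, pick compatible bases, and read off $r,s,i,j$ using the action of multiplication by $x,y$; this construction is canonical up to the $B$-action. The main obstacle, as the authors anticipate in Section~\ref{section:affine-B}, is that all of this must be proved at the level of rings of semi-invariants rather than geometrically: one has to show that the generators $\psi_f$ of Theorem~\ref{thm:det-semi-inv} together with the $B$-invariants of Proposition~\ref{prop:B-inv-subalgebra} exhaust a Proj description of $\FHilb^n(\mathbb{C}^2)$ in its standard projective embedding, and the inhomogeneous syzygies noted in the remark following Proposition~\ref{prop:B-inv-subalgebra} make this the hardest step.

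For (b), the inclusion $\mathbb{C}[\mu_B^{-1}(0)]^B \hookrightarrow \bigoplus_{k \ge 0}\mathbb{C}[\mu_B^{-1}(0)]^{B,\det^k}$ of graded rings induces a canonical projective morphism $\pi \colon \mu_B^{-1}(0)/\!\!/_{\det}B \to \mu_B^{-1}(0)/\!\!/B$. Smoothness of the source is either established intrinsically (using the Luna slice theorem applied to $\det$-stable quadruples, where the $B$-stabilizers should be trivial by a cyclicity argument on $i$) or inherited from the flag Hilbert scheme via (c). Birationality follows from Section~\ref{section:restriction-to-rss-singular-locus}: on $\mu_B^{-1}(0)^{\rss}$ stability and semistability coincide and both quotients agree with $\mathbb{C}^{2n} \setminus \Delta$ by \cite[Thm.~1.6]{MR3836769}, and properness is automatic from the Proj construction. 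Equidimensionality (\cite[Thm.~1.2]{Im-Scrimshaw-parabolic}) ensures $\pi$ is surjective onto each affine-quotient component, yielding a resolution. For (a), I would produce an involution $\tau \colon T^*(\mathfrak{b} \times \mathbb{C}^n) \to T^*(\mathfrak{b} \times \mathbb{C}^n)$ (after conjugating by the long element of $S_n$ to intertwine upper- and lower-triangular shapes and using the pairing $\mathfrak{b}^* \cong \mathfrak{g}/\mathfrak{u}^+$) that preserves $\mu_B^{-1}(0)$, swaps the roles of $i$ and $j$, and converts a $B$-action twisted by $\det$ into one twisted by $\det^{-1}$; at the level of generators, $\tau^*$ should exchange the $\psi_f$ of Theorem~\ref{thm:det-semi-inv} with the $\phi_g$ of Theorem~\ref{thm:det-inv-semi-inv}. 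Combined with (c), this matches the standard duality of flag Hilbert scheme data under transposition of ideals, closing the loop. The central obstacle remains the nonreductive invariant theory underpinning (c): every other step reduces to it or to facts already proved in the paper.
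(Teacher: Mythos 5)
This statement is not proved in the paper: it is stated as Conjecture~\ref{conjecture:relating-to-Hilbert-schemes} and explicitly left open, with the authors noting that the nonreductive invariant theory (whether the constructed invariants and semi-invariants actually generate $\mathbb{C}[T^*(\mathfrak{b}\times\mathbb{C}^n)]^B$ and the $\det^{\pm k}$-semi-invariant modules, and the unwieldy inhomogeneous syzygies) is precisely what they cannot yet control. Your proposal is a reasonable research program, but it is not a proof, and you concede as much: every part is reduced to step (c), whose ``hardest step'' is exactly the open problem the paper identifies. A strategy that bottoms out in ``one has to show the $\psi_f$ and the $B$-invariants exhaust a Proj description of the flag Hilbert scheme'' has not closed the gap; it has restated it.

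Beyond that, several intermediate steps as written would fail. Nevins's Lemma~4.1 produces a canonical lift of $\overline{s}$ only when $r$ is diagonal with pairwise distinct eigenvalues, so your morphism to the flag Hilbert scheme is defined at best on the regular semisimple locus, not on the whole $\det$-semistable locus; extending it over the degenerate locus is the substantive content of (c). The cyclicity/stability input is also delicate: on the component $\overline{\mathcal{C}}_0$ one has $i=0$, so ``$i$ is a cyclic vector'' and the trivial-stabilizer argument cannot be run uniformly over $\mu_B^{-1}(0)$, and the Luna slice theorem you invoke for smoothness requires a reductive group, which $B$ is not. Finally, your birationality claim for (b) cites \cite[Thm.~1.6]{MR3836769}, which computes only the affine quotient of the rss locus; the identification of the GIT quotient over that locus is itself only an expectation in Section~\ref{section:restriction-to-rss-singular-locus}. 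So the proposal neither matches a proof in the paper (there is none) nor supplies one independently.
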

Conjecture~\ref{conjecture:relating-to-Hilbert-schemes}\eqref{item:variational-GIT} is known as variational GIT, or wall-crossing. 

\begin{conjecture}
\label{conjecture:commuting-morphisms}
The following diagram 
\[
\xymatrix@-1pc{
\ar@{->>}[dd]
\mu_B^{-1}(0)/\!\!/_{\det}B \ar@{..>}[rr] & & \mu_G^{-1}(0)/\!\!/_{\det} GL_n(\mathbb{C}) \ar@{->>}[dd] \\ 
& \CircleArrowleft & \\ 
\mu_B^{-1}(0)/\!\!/B \ar@{..>}[rr] & & \mu_G^{-1}(0)/\!\!/GL_n(\mathbb{C})  \\ 
}
\] 
commutes. 
\end{conjecture}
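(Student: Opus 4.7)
The plan is to construct the dotted horizontal maps from Nevins' explicit lift of $\overline{s}$, and then verify commutativity via functoriality of the $\Proj$-over-$\Spec$ construction. First, on the open dense subset $U\subseteq\mu_B^{-1}(0)$ where $r$ has pairwise distinct eigenvalues, the Nevins Lemma cited above furnishes a unique lift of $\overline{s}\in\mathfrak{g}/\mathfrak{u}^+$ to $s\in\mathfrak{g}$ satisfying $\mu_G(r,s,i,j)=0$, with off-diagonal entries $s_{ab}=-(ij)_{ab}/(r_{aa}-r_{bb})$ for $a\neq b$. This yields a $B$-equivariant morphism $\Phi\colon U\to\mu_G^{-1}(0)$, which extends to a $G$-equivariant morphism $\widetilde{\Phi}\colon G\times^B U\to\mu_G^{-1}(0)$ via $(g,x)\mapsto g\cdot\Phi(x)$, using that the target carries the full $G$-action.

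Second, using the standard identification $\mathbb{C}[G\times^B Y]^G\cong\mathbb{C}[Y]^B$ for any $B$-variety $Y$, one has $\mu_B^{-1}(0)/\!\!/B\cong (G\times^B\mu_B^{-1}(0))/\!\!/G$, and analogously for the $\det$-twisted GIT quotient (noting that $\det|_B$ is literally the restriction of $\det$ from $G$, so $G$-semi-invariants restrict to $B$-semi-invariants). Thus $\widetilde{\Phi}$ induces pullback maps of graded rings $\mathbb{C}[\mu_G^{-1}(0)]^G\to\mathbb{C}[\mu_B^{-1}(0)]^B$ and $\mathbb{C}[\mu_G^{-1}(0)]^{G,\det^k}\to\mathbb{C}[\mu_B^{-1}(0)]^{B,\det^k}$, giving the dotted horizontal arrows (as morphisms over the locus where $\widetilde{\Phi}$ is regular, and as rational maps in general). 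Commutativity of the square then follows by functoriality: both vertical arrows are the canonical projective morphisms $\Proj\bigl(\bigoplus_{k\geq 0} R^{\chi^k}\bigr)\to\Spec(R)$, which are natural in ring maps of bigraded semi-invariant rings, and both horizontal arrows arise from pullback along the same geometric map $\widetilde{\Phi}$. Hence the diagram of schemes commutes because the induced diagram of graded rings does.

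The main obstacle is extending $\Phi$ across the coincident-eigenvalue locus, where the denominators $r_{aa}-r_{bb}$ vanish; equivalently, identifying the indeterminacy locus of the induced rational map on quotients. One expects this to be controlled by analyzing each irreducible component $\overline{\mathcal{C}}_\ell$ of Proposition~\ref{prop:Nevins-Prop-4.2} separately, using that $B$-orbits in the complement $\mu_B^{-1}(0)\setminus U$ may collapse sufficiently in the quotient for the map to extend as a morphism; this ties in closely with the codimension-one singular behavior described in Theorem~\ref{thm:singular-locus} and with the syzygies among the generators in \eqref{eqn:B-inv-polynomials}. Without this technical input, what one obtains is a commutative diagram of rational maps of schemes, which is plausibly the precise meaning intended by the dotted arrows in Conjecture~\ref{conjecture:commuting-morphisms}. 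Upgrading to a diagram of morphisms, and in tandem verifying that the horizontal maps are dominant (or have controlled image), is the delicate step that the proof would need to handle component by component.
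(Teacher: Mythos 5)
The statement you are addressing is left as an open conjecture in the paper (Conjecture~\ref{conjecture:commuting-morphisms}); the authors give no proof, and indeed they do not even construct the dotted horizontal arrows. So there is no argument of theirs to compare against, and your proposal should be judged on whether it closes the conjecture on its own. It does not: as you yourself concede in the final paragraph, what you actually obtain is a commutative square of \emph{rational} maps defined on the regular semisimple locus, while the conjecture (as drawn, with honest quotient schemes at all four corners) requires genuine morphisms, or at least an identification of where the horizontal maps are defined. That unresolved extension step is not a technicality; it is essentially the whole content of the conjecture, and your sketch offers no mechanism for it. Concretely: the lift $s_{ab}=-(ij)_{ab}/(r_{aa}-r_{bb})$ has poles along the coincident-eigenvalue locus, which is a divisor (codimension one) inside each component $\overline{\mathcal{C}}_\ell$ of $\mu_B^{-1}(0)$, so no Hartogs/normality argument can extend the pulled-back $G$-invariants or $\det$-semi-invariants across it for free --- especially since $\mu_B^{-1}(0)$ is reducible with $2^n$ components whose singular loci are themselves codimension one and non-transverse (Theorem~\ref{thm:singular-locus}). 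Whether $\tr(s^k)$, $j r^a s^b i$, etc.\ pull back to \emph{regular} $B$-(semi-)invariant functions must be verified generator by generator (this is exactly the role the explicit polynomials $f_J,g_J,h_\iota,k_{n,J},l_{J,1}$ and $\psi_f$ are designed to play), and you do not do this.

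Two smaller points also need repair. First, Nevins' lemma is stated for $r$ \emph{diagonal} with distinct eigenvalues; to get a $B$-equivariant $\Phi$ on the whole regular semisimple locus you must first diagonalize by an element of $B$ and invoke uniqueness of the lift to see the construction is independent of choices --- easy, but it has to be said, since equivariance is what feeds your identification $\mathbb{C}[G\times^B Y]^{G}\cong\mathbb{C}[Y]^{B}$. Second, even granting a graded ring map on semi-invariant algebras, functoriality of $\Proj$ over $\Spec$ only produces a morphism on the complement of the vanishing locus of the pulled-back irrelevant ideal; so the top horizontal arrow and the commutativity statement are again only obtained on an open locus that you have not identified. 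In short, your strategy (lift via Nevins on the rss locus, induce from $B$ to $G$, pull back graded semi-invariant rings, use naturality of $\Proj\to\Spec$) is a sensible and arguably the expected line of attack, but as written it proves a weaker statement than the conjecture, and the paper itself provides no proof you could be reproducing.
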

 
\begin{remark}
Although proving Conjectures~\ref{conjecture:relating-to-Hilbert-schemes} and \ref{conjecture:commuting-morphisms} would be interesting, we have obtained the syzygies for our $B$-invariant and $\det$-semi-invariant polynomials using \cite{GS}. Thus, although writing these syzygies in terms of our generators appear to be unsystematic and difficult to generalize for higher $n$, constructing affine and GIT quotients with our current set of generators and relating them to other well-known schemes would still be facinating and interesting. 
\end{remark}


\bibliographystyle{amsalpha} 
\bibliography{GS-GIT-affine}   

\end{document}